\newtheorem{theorem}{Theorem}[section]
\newtheorem{proposition}[theorem]{Proposition}
\newtheorem{corollary}[theorem]{Corollary}
\newtheorem{remark}[theorem]{Remark}
\newtheorem{lemma}[theorem]{Lemma}
\newtheorem{definition}[theorem]{Definition}
\numberwithin{equation}{section}
\begin{document}

\title[]{new examples of reducible theta divisors for some Syzygy bundles}
\author{Abel Castorena and H. Torres-L\'opez}

\address{Centro de Ciencias Matem\'aticas. Universidad Nacional Aut\'onoma de M\'exico, Campus Morelia. Antigua carretera P\'atzcuaro 8701. C.P. 58089. Colonia: Ex-hacienda de San Jos\'a de la Huerta. Morelia, Michoac\'an,M\'exico. } 
\email{abel@matmor.unam.mx}
\email{hugo@matmor.unam.mx}
\thanks{
The second named author is supported with a Posdoctoral Fellowship from CONACyT}


\keywords{syzygy bundle, stability of vector bundles,  theta divisor,  cohomological stability of vector bundles}
\subjclass[2010]{14C20, 14H10, 14H51,14H60}


\date{\today}

\begin{abstract}
Let $C$ be a smooth complex irreducible projective curve of genus $g$ with general moduli, and let $(L,H^0(L))$ be a generated complete linear series of type $(d,r+1)$ over $C$. The syzygy bundle, denoted by $M_L$, is the kernel of the evaluation map $H^0(L)\otimes\mathcal O_C\to L$. In this work we have a double purpose. The first one is to give new examples of stable syzygy bundles admitting theta divisor over general curves. We prove that if $M_L$ is strictly semistable then $M_L$ admits reducible theta divisor. The second purpose is to study the cohomological semistability of $M_L$, and  in this direction we show that when $L$ induces a birational map, the syzygy bundle $M_L$ is cohomologically semistable, and we obtain precise conditions for the cohomological semistability of $M_L$ where such conditions agree with the semistability conditions for $M_L$.

\end{abstract}
\maketitle
 \section{Introduction}
 Let $C$ be a smooth complex irreducible projective curve of genus $g$. Denote by $S\mathcal{U}(r,M)$, the moduli space of semistable vector bundles of rank $r$ with fixed determinant $M\in Pic^d(C)$ over $C$. Assume that the slope of $M$, $\mu=\frac{d}{r}$, is an integer number and set $v:=g-1-\mu$. Consider a line bundle  $N$ in $Pic^{v}(C)$, define 
\begin{equation}
\mathcal D_N:=\{E\in S\mathcal{U}(r,M)| h^0(E\otimes N)>0\}.
\end{equation}
It is well known that $\mathcal D_N$ describes a Cartier divisor on  $S\mathcal{U}(r,M)$. The associated line bundle $\mathcal{L}:=\mathcal{O}(\mathcal D_N)$ does not depend on the choice of $N$. This line bundle is called the {\it determinant line bundle}, moreover, the Picard group of $S\mathcal{U}(r,M)$ is generated by $\mathcal{L}$ i.e., $Pic(S\mathcal{U}(r,M))=\mathbb{Z}\cdot\mathcal{L}$. 

\noindent For a vector bundle $E\in S\mathcal{U}(r,M)$, the {\it theta divisor} for $E$ is defined as
\begin{eqnarray*}
 \Theta_{E}:=\{\mathcal{P}\in Pic^v(C)| h^0(\mathcal{P}\otimes E)\neq 0\}\subset Pic^v(C).
\end{eqnarray*} 

\noindent The Strange duality (see e.g. \cite{popastrange}, Section 5.2) says that there is a canonical isomorphism
 \begin{eqnarray*}
 SD: H^0(S\mathcal{U}(r,M),\mathcal{L})^{\vee}\rightarrow H^0(Pic^{v}(C),\mathcal{O}(r\Theta)),
 \end{eqnarray*}
 making the following diagram commutative
 \begin{eqnarray}\label{strange}
 \xymatrix{ S\mathcal{U}(r,M)  \ar@{-->}[dr]_{\theta}\ar@{-->}[r]^{\phi_{\mathcal{L}}}
&|\mathcal{L}|^{\vee} \ar[d]^{|\wr}\\
 & |r\Theta|}
 \end{eqnarray}
 where $\Theta:=\{ \mathcal{P}\in Pic^{v}(C) | h^0(\mathcal{P}\otimes M)\neq 0\}$ and	$\theta$ is the {\it theta map} which associates to a vector bundle $E$ its theta divisor $\Theta_E$. 
 We say that $E$ {\it admits theta divisor} if $\Theta_E\subsetneq Pic^v(C)$, in this case  $\Theta_E$ has a natural structure of divisor in $Pic^v(C)$. The points where the map $\theta$ is not defined  correspond to vector bundles that do not admit theta divisor, and such bundles can be identified with the base points of the linear system $|\mathcal{L}|$.

\vspace{.2cm}

We recall some properties of theta divisors:
\begin{itemize}
 \item  If $E$ admits theta divisor, then $E$ is semistable: suppose that there is a subbundle $F\subset E$ with $\mu(F)>\mu(E)$, thus by Riemann Roch we have $h^0(F\otimes\mathcal{P})>0$ for all $\mathcal{P}\in Pic^v(C)$, hence $h^0(E\otimes\mathcal{P})>0$ and this implies that $E$ does not admit theta divisor.
\item  If $E$ admits theta divisor, then $E^{\vee}$ admits theta divisor.
\item If $E$ admits theta divisor, then $E\otimes L$ admits theta divisor for any line bundle $L$.
\end{itemize}

\noindent A generated linear series of type $(d,r+1)$ over $C$ is a pair $(L,V)$, where $L$ is a generated line bundle of degree $d$ on $C$ and $V\subseteq H^0(L)$ is a linear subspace of dimension $r+1$ that generates $L$. The kernel $M_{V,L}$ of the evaluation map $V\otimes \mathcal{O}_C\rightarrow L$ fits into the following exact sequence	
\begin{equation}\label{dualspam}
 \xymatrix{ 0 \ar[r]^{} &  M_{V,L} \ar[r]^{}&  V\otimes \mathcal{O}_C  \ar[r]^{} &  L \ar[r]^{} & 0.}
\end{equation}

\noindent The bundle $M_{V,L}$ is called a syzygy bundle. When $V=H^0(L)$, we will denote the bundle $M_{H^0(L),L}$ by $M_L$. The vector bundle $M_{V,L}$ and its dual $M_{V,L}^{\vee}$ have been studied from different points of view because of the rich geometry they encode. For example, the syzygy bundle is important in the study  of Brill-Noether varieties for curves, the Maximal Rank Conjecture and the Minimal Resolution Conjecture (see e.g., \cite{resolucionminimal}). For example, in \cite{resolucionminimal}, the authors prove that the vector bundle  $M_{K_C}$ and it's exterior powers $\wedge^2 M_{K_C},\ldots, \wedge^{g-2}M_{K_C}$ admit theta divisor, where $K_C$ is the canonical line bundle over $C$. In \cite{beaville}, Beauville proves that  for a non-hyperelliptic curve and a general line bundle $L$ of degree $2g$, the vector bundle $M_L$ and it's exterior powers $\wedge^2 M_L,\ldots, \wedge^{g-1}M_L$ admit a reducible theta divisor. In connection with the linear system $|\mathcal{L}|$, in \cite{popa}, M. Popa gives examples of base points for $|\mathcal{L}|$ on the moduli space $SU(r,\mathcal{O}_C)$ and he proves that for sufficiently large $r$ the base locus is positive dimensional. Moreover, he shows that for a line bundle $L$ with degree $deg(L)\geq 2g+2$ the vector bundle $M_L$ does not admits theta divisor. 
\vskip2mm
\noindent In this work we give new examples of stable syzygy bundles that admit theta divisor over general curves, these examples are contained in Theorems  \ref{thetadivisorLZ} and \ref{principaltheorem} in section 2 below. As corollary of these theorems we have the following result (see Theorem \ref{tetadivisorsemistable} below):

\begin{theorem}\label{introsemistable} Let $C$ be a general curve of genus g, and let $L\in Pic^d(C)$ be a generated line bundle with $h^0(L)=r+1$ such that $M_L$ is strictly  semistable, then $M_L$ admits  reducible theta divisor.
\end{theorem}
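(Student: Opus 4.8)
The plan is to exhibit $M_L$ as an extension $0\to F\to M_L\to Q\to 0$ of two bundles that, by Theorems~\ref{thetadivisorLZ} and~\ref{principaltheorem} (together with the trivial case of line bundles), admit a theta divisor, and then to show that the theta divisor of such an extension is the sum $\Theta_F+\Theta_Q$. Set $r=\mathrm{rk}\,M_L$, so $\deg M_L=-d$ and $\mu:=\mu(M_L)=-d/r$ (an integer, so that $\Theta_{M_L}\subset Pic^{v}(C)$ with $v:=g-1-\mu$ makes sense), and $r\geq 2$ since $M_L$ is semistable but not stable. A Riemann--Roch computation gives $\chi(G\otimes\mathcal P)=\deg G-\mathrm{rk}(G)\,\mu=0$ for every $\mathcal P\in Pic^{v}(C)$ and every subsheaf or quotient $G$ of $M_L$ with $\mu(G)=\mu$. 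Strict semistability provides a proper nonzero subbundle $F\subsetneq M_L$ with $\mu(F)=\mu$; then $Q:=M_L/F$ is semistable with $\mu(Q)=\mu$, and $\mathrm{rk}\,F,\mathrm{rk}\,Q\geq 1$.

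The crucial step is to recognise $F$ and $Q$ among the bundles to which Theorems~\ref{thetadivisorLZ} and~\ref{principaltheorem} apply. Viewing $F\subset M_L\subset H^0(L)\otimes\mathcal O_C$ and letting $W\subseteq H^0(L)$ be the span of the fibres of $F$, the image of $W\otimes\mathcal O_C$ in $L$ is the globally generated line bundle $L':=L(-B)$, where $B$ is the base divisor of $W$, and $(W\otimes\mathcal O_C)\cap M_L$ equals the syzygy bundle $M_{W,L'}$, which fits as $F\subseteq M_{W,L'}\subseteq M_L$ with all three bundles semistable of the same slope $\mu$. Hence, after replacing $F$ by $M_{W,L'}$, we may assume $F=M_{W,L'}$ is the syzygy bundle of a generated sub-linear series $(L',W)$ with $L'\subsetneq L$ and $\mu(M_{W,L'})=\mu(M_L)$ --- precisely the configuration governed by Theorems~\ref{thetadivisorLZ} and~\ref{principaltheorem}, so $F$ admits a theta divisor. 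The snake lemma applied to the defining sequences of $M_{W,L'}$ and $M_L$ presents $Q$ as the kernel of a surjection $\big(H^0(L)/W\big)\otimes\mathcal O_C\twoheadrightarrow L|_B$; in particular $Q$ is torsion free, so $F$ is a genuine subbundle, and $Q$ is either a line bundle $\mathcal O_C(-B)$, whose theta divisor is a translate of the Riemann theta divisor, or, if $\mathrm{rk}\,Q\geq 2$, again a bundle of the form covered by the quoted theorems (equivalently, if $Q$ is itself strictly semistable one iterates and passes to a full Jordan--H\"older filtration, all of whose stable factors have slope $\mu$ and admit a theta divisor). Establishing this structure --- in particular excluding the degenerate possibility $W=H^0(L)$, and checking that the slope equality $\mu(F)=\mu(M_L)$ is exactly the hypothesis required by the earlier theorems --- is where essentially all the work lies, and I expect it to be the main obstacle; the generality of $C$ should be what rules out the degenerate cases.

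It remains to reconstruct $\Theta_{M_L}$. Tensoring $0\to F\to M_L\to Q\to 0$ by $\mathcal P\in Pic^{v}(C)$: (i) $h^0(F\otimes\mathcal P)>0$ implies $h^0(M_L\otimes\mathcal P)>0$, so $\Theta_F\subseteq\Theta_{M_L}$; (ii) if $\mathcal P\notin\Theta_F$ then $h^0(F\otimes\mathcal P)=h^1(F\otimes\mathcal P)=0$ (using $\chi(F\otimes\mathcal P)=0$), hence $h^0(M_L\otimes\mathcal P)=h^0(Q\otimes\mathcal P)$, so each component of $\Theta_Q$ either lies in $\Theta_F$ or meets $\Theta_{M_L}$ in a dense open subset, and as $\Theta_{M_L}$ is closed this gives $\Theta_Q\subseteq\Theta_{M_L}$; (iii) if $\mathcal P\notin\Theta_F\cup\Theta_Q$ then $h^0(M_L\otimes\mathcal P)\leq h^0(F\otimes\mathcal P)+h^0(Q\otimes\mathcal P)=0$. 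Thus $\Theta_{M_L}=\Theta_F\cup\Theta_Q$ as sets, and since $\Theta_F,\Theta_Q$ are proper divisors, $M_L$ admits a theta divisor. Moreover, using the determinantal (Fitting) description of $\Theta_{M_L}$ coming from a two-term locally free resolution of a flat family on $C\times Pic^{v}(C)$ --- which for an extension may be chosen block triangular --- one obtains the equality of divisors $\Theta_{M_L}=\Theta_F+\Theta_Q$ (respectively $\sum_i\Theta_{G_i}$ over the Jordan--H\"older factors $G_i$). Since there are at least two nonzero effective summands, $\Theta_{M_L}$ is reducible, which proves the theorem.
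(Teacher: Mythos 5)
Your overall strategy --- realize $M_L$ as an extension of two bundles of the same slope that each admit a theta divisor, then show $\Theta_{M_L}=\Theta_F\cup\Theta_Q$ --- is exactly the shape of the paper's argument, and your final paragraph (the set-theoretic identification of $\Theta_{M_L}$ with the union) is fine. The gap is in the step you yourself flag as ``where essentially all the work lies'': producing the right subbundle $F$. The paper does not derive this from an abstract destabilizing subbundle; it quotes the characterization of strict semistability from (\cite{abelyhugo}, Corollary 4.3), restated at the start of Section 2, which says that $M_L$ semistable-not-stable forces $h^1(L)=0$, $d=g+r$, $r\mid g$, and the existence of an effective divisor $Z$ of degree $1+\frac{g}{r}$ with $h^0(L(-Z))=r$. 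This immediately gives the explicit exact sequence $0\to M_{L(-Z)}\to M_L\to\mathcal O(-Z)\to 0$, with a rank-$(r-1)$ subbundle handled by Theorem \ref{thetadivisorLZ} and a line-bundle quotient whose theta divisor is a translate of the Riemann theta divisor. Your route --- take the span $W$ of the fibres of a destabilizing $F$, pass to $M_{W,L'}$, exclude $W=H^0(L)$, check the slope matches, possibly iterate over a Jordan--H\"older filtration --- is essentially an attempt to reprove that corollary, and you leave it unproved. Without it the argument does not close.

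A second, related problem is that you misstate what Theorems \ref{thetadivisorLZ} and \ref{principaltheorem} cover. Theorem \ref{thetadivisorLZ} is not about an arbitrary generated sub-linear series $(L',W)$ of matching slope: it applies to the complete series $M_{L(-Z)}$ for the specific $Z$ of degree $1+\frac{g}{r}$ coming from the semistability criterion, and its proof uses that $\xi=K_C\otimes L^{\vee}(Z)$ is globally generated and that the Petri map for $L(-Z)$ is injective on the general curve. Theorem \ref{principaltheorem} concerns $M_{\xi}$, which is not a subquotient of $M_L$ at all and plays no role in deducing the present statement. So even granting your reduction to $F=M_{W,L'}$, you would still need to show $W=H^0(L(-Z))$ with $\deg Z=1+\frac{g}{r}$ and $h^0(L(-Z))=r$ before either quoted theorem applies. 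The fix is simply to invoke (\cite{abelyhugo}, Corollary 4.3) at the outset, as the paper does, and then your concluding union argument goes through verbatim.
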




In the study of the cohomological stability of the syzygy bundle $M_L$, in \cite{ein} L. Ein and  R. Lazarsfeld prove that $M_L$ is cohomologically stable for any line bundle $L$ on a curve of genus $g$ assuming that $deg(L)\geq 2g+1$. In this context, we prove that over a general curve $C$, $M_L$ is cohomologically semistable for any line bundle $L$ on $C$ that induces a birational map (see Theorem \ref{CohoM} in section 3 below). We find also precise conditions for the cohomological semistability of $M_L$, and these conditions agree with the stability conditions for $M_L$ (see Corollary \ref{condicionescoh} below):

\begin{corollary}\label{intropropiedades}
 Let $L\in Pic^d(C)$ be a line bundle with $h^0(L)=r+1$, inducing a birational morphism  over a general curve $C$ of genus $g\geq 2$. Then 
\begin{enumerate}
\item [(I)] $M_L$ is cohomologically semistable (not stable) if and only if all the following three conditions are satisfied
 \begin{enumerate}
  \item $h^1(L)=0.$
  \item $d=g+r$ and $r$  divides $g$.
  \item There is a line bundle $\mathcal{A}$ with degree $\text{deg}(\mathcal{A})=g+r-1-\frac{g}{r}$ and $h^0(\mathcal{A})=r$ such that $h^0(\wedge^{r-1}M_L\otimes \mathcal{A})\neq 0$. Moreover, in this case we have $h^0(\wedge^{r-1}M_L\otimes \mathcal{A})=1$.
 \end{enumerate}
 
 \item [(II)] $M_L$ is cohomologically stable if and only if $M_L$ is stable.
 \end{enumerate}
\end{corollary}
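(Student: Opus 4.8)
The plan is to deduce the statement from Theorem~\ref{CohoM}, which already guarantees that $M_L$ is cohomologically semistable over a general curve when $L$ is birational; what remains is to determine when cohomological \emph{stability} fails and to identify this both with the list (a)--(c) and with the failure of ordinary stability. Throughout one sets $\mu:=\mu(M_L)=-d/r$ and uses $\det M_L=L^{-1}$, so $\wedge^{r-1}M_L\cong M_L^{\vee}\otimes L^{-1}$. Since cohomological semistability already gives $h^0(\wedge^t M_L\otimes A)=0$ whenever $\deg A<td/r$, the bundle $M_L$ fails to be cohomologically stable precisely when for some $1\le t\le r-1$ there is a line bundle $A$ with $\deg A=td/r$ (hence $r\mid td$) and $h^0(\wedge^t M_L\otimes A)\neq 0$; equivalently, when some $\wedge^t M_L$ carries a sub-line-bundle of slope $t\mu$. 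The first thing I would record is the formal half of (II): if $M_L$ is cohomologically stable then it is stable, because a proper subbundle $F\subset M_L$ of rank $k$ with $\mu(F)\ge\mu$ yields $\det F\hookrightarrow\wedge^{k}M_L$, a sub-line-bundle of slope $\ge k\mu$, contradicting cohomological stability — the same Riemann--Roch argument used for the theta-divisor properties in the Introduction.

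For the ``if'' direction of (I) I would begin from Riemann--Roch, $h^1(L)=r+g-d$, so that (a) is exactly the equality $d=g+r$; together with $r\mid g$ from (b) this gives $r\mid d$ and
\[
\deg\mathcal A=g+r-1-\tfrac{g}{r}=(r-1)\tfrac{d}{r}=-(r-1)\mu ,
\]
so the nonzero section furnished by (c) is precisely a ``boundary'' section with $t=r-1$; hence $M_L$ is not cohomologically stable, and with Theorem~\ref{CohoM} it is cohomologically semistable but not stable.

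For the ``only if'' direction of (I) — the real content — suppose $M_L$ is not cohomologically stable and fix $t$ and $A$ as above. If $t=r-1$, I would rewrite the section via $\wedge^{r-1}M_L\cong M_L^{\vee}\otimes L^{-1}$ as a nonzero morphism $\varphi\colon M_L\to L^{-1}\otimes A$ with target a line bundle of degree $\mu$; since $M_L$ is semistable of slope $\mu$, its image is a rank-one quotient of degree $\ge\mu$ lying in a line bundle of degree $\mu$, so $\varphi$ is surjective and $F:=\ker\varphi$ is a subbundle of rank $r-1$ and slope $\mu$, i.e.\ $M_L$ is strictly semistable. The step I expect to be the main obstacle is excluding boundary sections with $t\le r-2$: I would show that over a general curve and for $L$ birational a sub-line-bundle of $\wedge^t M_L$ of maximal slope $t\mu$ can occur only for $t=r-1$, extracting this from the mechanism behind Theorem~\ref{CohoM} by passing from such a sub-line-bundle to a destabilizing subbundle of $M_L$ and then using the genericity of $C$ and the birationality of $L$ to rule out proper subbundles of slope $\ge\mu$ of any rank other than $r-1$. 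Granting this, ``not cohomologically stable'' is equivalent to ``$M_L$ strictly semistable'', and it remains only to put the latter into the numerical form (a)--(c): the subbundle $F$ forces $\mu\in\mathbb Z$, i.e.\ $r\mid d$, and by the (semi)stability analysis of syzygy bundles carried out in Section~2 this extremal situation occurs only at $d=g+r$, equivalently $h^1(L)=0$ and $r\mid g$; one then sets $\mathcal A:=L\otimes(M_L/F)$, so that $\deg\mathcal A=(r-1)d/r$ and $h^0(\mathcal A)=r$ after a short cohomology chase with the sequence $0\to F\to M_L\to L^{-1}\otimes\mathcal A\to 0$ and the defining sequence of $M_L$. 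Finally, $h^0(\wedge^{r-1}M_L\otimes\mathcal A)=1$ because a second independent section would produce a surjection $M_L\to L^{-1}\otimes\mathcal A$ with kernel a different maximal rank-$(r-1)$ subbundle, which I would exclude using the rigidity here of the Jordan--H\"older graded pieces of $M_L$ (again a consequence of $C$ general and $L$ birational).

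Part (II) then follows: over a general curve with $L$ birational $M_L$ is always semistable, so ``$M_L$ stable'' means ``$M_L$ not strictly semistable'', which by the equivalence just established is ``$M_L$ cohomologically stable''; one direction of this is the formal implication recorded in the first paragraph and the other is the contrapositive of the ``only if'' part of (I).
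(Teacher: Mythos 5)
Your overall architecture (use Theorem~\ref{CohoM} for semistability, locate the failure of cohomological stability at boundary sections with $\deg\mathcal A=t\,d/r$, show only $t=r-1$ can occur, then translate into (a)--(c)) matches the paper's, and your handling of the case $t=r-1$ and of the formal implication ``cohomologically stable $\Rightarrow$ stable'' is fine. But the step you yourself flag as the main obstacle --- excluding boundary sections with $t\le r-2$ --- is a genuine gap, and the method you sketch for it would not work. You propose to pass from a sub-line-bundle of $\wedge^t M_L$ of slope $t\mu$ to a destabilizing subbundle of $M_L$; however, a nonzero map $\mathcal A^{\vee}\to\wedge^t M_L$ does not in general come from a rank-$t$ subbundle of $M_L$. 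This is exactly why cohomological stability is strictly stronger than stability (the paper's remark before Corollary~\ref{condicionescoh} makes this point), and it is the reason condition (c) is phrased in terms of $h^0(\wedge^{r-1}M_L\otimes\mathcal A)$ rather than in terms of subbundles. The paper's actual mechanism is different in kind: Proposition~\ref{cohomology} shows, via the Mistretta--Stoppino filtration of $M_L$ by general points $x_1,\dots,x_{r-1}$, that $h^0(\wedge^t M_L\otimes\mathcal A)=0$ whenever $h^0(\mathcal A)\le t$; and Proposition~\ref{seccionesA} is a Brill--Noether count on the general curve showing that $\deg\mathcal A\le t\,d/r$ forces $h^0(\mathcal A)\le t+1$, with equality only if $t=r-1$, $d=g+r$ and $\deg\mathcal A=t\,d/r$. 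Thus the exclusion of $t\le r-2$ is a statement about the number of sections of $\mathcal A$, not about subbundles of $M_L$, and your proposed reduction cannot replace it.

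Two smaller deficiencies. First, your justification of $h^0(\wedge^{r-1}M_L\otimes\mathcal A)=1$ by ``rigidity of the Jordan--H\"older pieces'' is not an argument; the paper obtains it directly from the exact sequence of Lemma~3.4 by choosing the $x_j$ general with $h^0(\mathcal A(-x_1-\cdots-x_{r-1}))=1$, so that $h^0(\wedge^{r-1}M_L\otimes\mathcal A)$ is squeezed between the vanishing of the sub-pieces $L^{\vee}\otimes\mathcal A(-x_j)$ (negative degree) and the one-dimensional quotient piece. Second, in the direction ``$M_L$ not stable $\Rightarrow$ not cohomologically stable'' the cohomology chase you defer is the actual content: the paper takes $\mathcal A=L(-Z)$ and exhibits the section explicitly as $\mathcal O_C\hookrightarrow\wedge^{r-1}M_L\otimes L(-Z)$ coming from $0\to M_{L(-Z)}\to M_L\to\mathcal O(-Z)\to 0$; your $\mathcal A=L\otimes(M_L/F)$ is the same object, but the claim $h^0(\mathcal A)=r$ and the existence of the section need to be carried out, not asserted.
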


This paper is organized as follow. In Section 2 we prove the Theorem \ref{introsemistable}, and in Section 3 we prove that over a general curve, the syzygy bundle $M_L$ is cohomologically semistable when $L\in\text{Pic}(C)$ induces a birational map, and we finish with the proof of Corollary \ref{intropropiedades}.  
		
\section{syzygy bundles and theta divisors}

\noindent Let  $L\in Pic^d(C)$ be a globally generated line bundle  over a  general curve $C$ of genus $g$  with $h^0(L)=r+1$. In (\cite{abelyhugo}, Corollary 4.3), the authors proved that the syzygy bundle  $M_L$ is semistable (not stable) if and only if all the following three conditions hold:
\begin{enumerate}
  \item $h^1(L)=0$.
	  
	  \vspace{.1cm}
	  
	  \item $d=g+r$ and $r$ divides  $g$.
	  
	  \vspace{.1cm}
	  
	  \item There is an effective divisor $Z$ with $h^0(L(-Z))=r$ and $\text{deg}(Z)=1+\frac{g}{r}$.
\end{enumerate}
From now on we will denote by $Z$ the effective divisor that satisfy condition (3) above. Furthermore, with the above hyphotesis on $L$ and $Z$ we will denote $\xi:=K_C\otimes L^{\vee}(Z)$.

\begin{remark}
\begin{em}
The line bundle $\xi$ is globally generated: First note that $\text{deg}(\xi)=g-1-r+\frac{g}{r}$ and $h^0(\xi)=h^1(L(-Z))=\frac{g}{r}$. By Riemann-Roch and Serre duality Theorems, we have that for any $p\in C$, the condition $h^0(\xi(-p))=h^0(\xi)-1$ is equivalent to $h^0(L(-Z+p))=r$. From the fact that $h^0(L(-Z))=r$, then $h^0(L(-Z+p))$ is equal to $r$ or $r+1$. Suppose that  $h^0(L(-Z+p))=r+1$ for some point $p\in C$. Since $C$ is a general curve and $L(-Z+p)$ is a line bundle of degree $g+r-\frac{g}{r}$ with  $r+1$ sections, we conclude that the corresponding Brill-Noether number $\rho$ for $L(-Z+p)$ is nonnegative, however $\rho=g-(r+1)\cdot(r-(g+r-\frac{g}{r})+g))=g-(r+1)\cdot \frac{g}{r} <0$, which contradicts that $C$ is general. So $h^0(L(-Z+p))=r$  and $\xi$ is globally generated.
\end{em}
\end{remark}

\begin{theorem}\label{thetadivisorLZ} Let $C$ be a general curve of genus $g\geq 3$.  Under all three conditions above (1)-(3), the syzygy bundle $M_{L(-Z)}$ admits theta divisor.
\end{theorem}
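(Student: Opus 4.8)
The plan is to show that $M_{L(-Z)}$ admits a theta divisor by exhibiting at least one line bundle $\mathcal{P}\in\mathrm{Pic}^v(C)$ (where $v$ is the appropriate degree for $M_{L(-Z)}$, i.e. $v = g-1-\mu(M_{L(-Z)})$) such that $h^0(M_{L(-Z)}\otimes\mathcal{P})=0$; by upper-semicontinuity this forces $\Theta_{M_{L(-Z)}}\subsetneq\mathrm{Pic}^v(C)$. First I would record the numerics: $L(-Z)$ has degree $d-\deg(Z)=g+r-1-\frac{g}{r}$ and, by condition (3), $h^0(L(-Z))=r$, so $M_{L(-Z)}$ has rank $r-1$, determinant $L(-Z)^{\vee}$, and slope $\mu(M_{L(-Z)})=-\frac{\deg L(-Z)}{r-1}$. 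I would compute $v$ explicitly and check it is an integer (this is where the hypothesis $r\mid g$ is used). The natural candidate for the test line bundle is built from $\xi=K_C\otimes L^{\vee}(Z)$ introduced before the theorem, which is globally generated of degree $g-1-r+\frac{g}{r}$ with $h^0(\xi)=\frac{g}{r}$; note the exact sequence $0\to M_{L(-Z)}\to H^0(L(-Z))\otimes\mathcal{O}_C\to L(-Z)\to 0$ dualizes and twists nicely against $\xi$.

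The key computation is then to tensor the defining sequence of $M_{L(-Z)}$ by a suitable $\mathcal{P}$ and pass to cohomology. Twisting $0\to M_{L(-Z)}\to H^0(L(-Z))\otimes\mathcal{O}_C\to L(-Z)\to 0$ by $\mathcal{P}$ gives
\[
0\to H^0(M_{L(-Z)}\otimes\mathcal{P})\to H^0(L(-Z))\otimes H^0(\mathcal{P})\to H^0(L(-Z)\otimes\mathcal{P})\to\cdots,
\]
so $h^0(M_{L(-Z)}\otimes\mathcal{P})=0$ exactly when the multiplication map $H^0(L(-Z))\otimes H^0(\mathcal{P})\to H^0(L(-Z)\otimes\mathcal{P})$ is injective. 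The strategy I would use is to choose $\mathcal{P}$ so that $h^0(\mathcal{P})=1$ (e.g. $\mathcal{P}=\mathcal{O}_C(D)$ for a general effective divisor $D$ of the right degree, or a general line bundle of that degree on a general curve, which has a unique section), in which case injectivity of $H^0(L(-Z))\otimes H^0(\mathcal{P})\to H^0(L(-Z)\otimes D)$ amounts to injectivity of $H^0(L(-Z))\hookrightarrow H^0(L(-Z)(D))$, which holds automatically since $D$ is effective. One must only check that $v>0$ so that such an effective $\mathcal{P}$ exists, and that the degree bookkeeping is consistent; alternatively, if $v=0$ one works with $\mathcal{P}=\mathcal{O}_C$ directly and shows $h^0(M_{L(-Z)})=0$, which follows because $M_{L(-Z)}\subset H^0(L(-Z))\otimes\mathcal{O}_C$ has no trivial subsheaf (the evaluation map is injective on global sections as $L(-Z)$ is globally generated of positive degree, hence $h^0(M_{L(-Z)})=\ker\big(H^0(L(-Z))\to H^0(L(-Z))\big)=0$).

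I would then reconcile the case analysis: if $v\ge 1$ the effective-divisor argument applies; if $v=0$ the direct argument applies; and $v<0$ cannot occur under (1)--(3) because then $M_{L(-Z)}$ would have slope forcing $h^1(L(-Z))$ large, contradicting the Brill–Noether genericity used in the Remark. The main obstacle I anticipate is not the cohomology vanishing itself — with $h^0(\mathcal{P})=1$ it is essentially formal — but rather pinning down $v$ and verifying that the construction produces a genuine divisor rather than the whole Picard variety in the boundary case, i.e. ensuring the chosen $\mathcal{P}$ has the predicted number of sections on the general curve $C$; this is exactly the point where genericity of $C$ (via Brill–Noether theory, as already invoked in the Remark for $\xi$) must be used carefully, and where one might instead prefer to take $\mathcal{P}$ general in $\mathrm{Pic}^v(C)$ and quote that a general line bundle of nonnegative Brill–Noether-expected degree behaves as expected. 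Once one such $\mathcal{P}$ with $h^0(M_{L(-Z)}\otimes\mathcal{P})=0$ is produced, semicontinuity finishes the proof, and in fact combined with the properties of theta divisors recalled in the introduction (e.g. stability of admitting theta divisor under duals and twists) one gets the statement cleanly.
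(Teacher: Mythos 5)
There is a genuine gap, and it is numerical: you never compute $v$, and the value rules out your central device. Here $\deg L(-Z)=g+r-1-\frac{g}{r}=\frac{(r-1)(g+r)}{r}$ and $\operatorname{rk}M_{L(-Z)}=r-1$, so $\mu(M_{L(-Z)})=-1-\frac{g}{r}$ and $v=g-1-\mu=g+\frac{g}{r}\geq g+1$. Consequently \emph{every} line bundle $\mathcal{P}$ of degree $v$ satisfies, by Riemann--Roch, $h^0(\mathcal{P})\geq v-g+1=\frac{g}{r}+1\geq 2$; a $\mathcal{P}$ with a unique section simply does not exist in the relevant degree, and your fallback cases $v=0$ and $v<0$ never occur. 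Worse, the source and target of the multiplication map $H^0(\mathcal{P})\otimes H^0(L(-Z))\to H^0(L(-Z)\otimes\mathcal{P})$ both have dimension $g+r$ when $h^0(\mathcal{P})=\frac{g}{r}+1$ (the generic, minimal value), so injectivity is equivalent to the map being an isomorphism --- this is a genuinely nontrivial statement, not the formal injectivity of multiplication by a single section.

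The missing idea is how to prove that injectivity. The paper takes $\mathcal{P}=\xi(G)$ with $\xi=K_C\otimes L^{\vee}(Z)$ and $G$ a general effective divisor of degree $r+1$, checks that $h^0(\mathcal{P})=\frac{g}{r}+1$ and that $|\mathcal{P}|$ is base-point free, and then shows that any element of $\ker(\mu_{\mathcal{P},L(-Z)})$ is forced (by evaluating at the points of $G$ and using $h^0(L(-Z-G))=0$) into the kernel of the Petri map $H^0(L(-Z))\otimes H^0(K_C\otimes L^{\vee}(Z))\to H^0(K_C)$, which is injective because $C$ is general (Gieseker--Petri). That appeal to the Petri map is the substantive content of the proof and is entirely absent from your proposal; the reduction to injectivity of the multiplication map, which you do have, is only the easy first step. (The paper also needs the cases $h^0(\mathcal{P})>\frac{g}{r}+1$ and ``$|\mathcal{P}|$ has a base point'' only to describe $\Theta_{M_{L(-Z)}}$; for existence of the theta divisor the single well-chosen $\mathcal{P}$ suffices, as you correctly anticipate via semicontinuity.)
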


\begin{proof}
Note that by Riemann-Roch Theorem and following Remark 2.1 it is easy to see that $L(-Z)$ is generated, then from the evaluation map $H^0(L(-Z))\otimes\mathcal O_C\to L(-Z)$ we have for every line bundle $\mathcal{P}$ the following exact sequence:
 \begin{eqnarray}\label{sucesion21}
  \xymatrix{ 0 \ar[r]^{} &  M_{L(-Z)}\otimes \mathcal{P} \ar[r]^{}&  H^0(L(-Z))\otimes \mathcal{P}  \ar[r]^{} &  L(-Z)\otimes \mathcal{P} \ar[r]^{} & 0.}
 \end{eqnarray}
The bundle $M_{L(-Z)}$ admits theta divisor if there exists a line bundle $\mathcal{P}$ of degree  $\text{deg}(\mathcal{P})=g+\frac{g}{r}$ such that  $h^0(M_{L(-Z)}\otimes \mathcal{P})=0$. The condition $h^0(M_{L(-Z)}\otimes \mathcal{P})=0$ is equivalent to that the multiplication map $\mu_{\mathcal{P},L(-Z)}:H^0(\mathcal{P})\otimes H^0(L(-Z))\rightarrow H^0(L(-Z)\otimes \mathcal{P})$ is injective.
 Note that the degree $\text{deg}(L(-Z)\otimes \mathcal{P})=2g+r-1\geq 2g $ and $h^0(L(-Z)\otimes \mathcal{P})=g+r$, so we describe the theta divisor $\Theta_{M_{L(-Z)}}$ analyzing the following three cases:
 
\vskip3mm
\noindent (i). If $h^0(\mathcal{P})>\frac{g}{r}+1$ we have $ h^0(\mathcal{P})\cdot h^0(L(-Z))>h^0(\mathcal{P}\otimes L(-Z)),$ thus $\mathcal{P}\in \Theta_{M_{L(-Z)}}.$
  
\vspace{.2cm}
  
\noindent (ii). Assume that $h^0(P)=\frac{g}{r}+1$ and that the linear system  $|\mathcal{P}|$ has a base point. The spaces $ H^0(\mathcal{P})\otimes H^0(L(-Z))$ and $H^0(\mathcal{P}\otimes L(-Z))$ have the same dimension $g+r$, then if  $\mu_{\mathcal{P},L(-Z)}$ is injective, it is an isomorphism, thus the linear system  $|L(-Z)\otimes\mathcal{P}|$ has a base point; but this is impossible since $\text{deg}(L(-Z)\otimes\mathcal{P})=2g+r-1\geq 2g$. Hence we have $\mathcal{P}\in \Theta_{M_{L(-Z)}}$.
  
\vspace{.2cm}
  
\noindent(iii). Finally assume that $|\mathcal{P}|$ is base-point free and $h^0(\mathcal{P})=\frac{g}{r}+1$. By dimension of domain and target space, the  surjectivity and the injectivity of the multiplication map $\mu_{\mathcal{P},L(-Z)}$ are equivalent. So $\mathcal{P}\notin\Theta_{M_{L(-Z)}}$ if and only if  the multiplication map $\mu_{\mathcal{P},L(-Z)}$ is injective or surjective.
 \vskip3mm

\noindent Let $G$ be a general effective divisor of degree $r+1$ over $C$. We recall that $\xi=K_C\otimes L^{\vee}(Z)$ is generated. Let $\mathcal{P}:= \xi(G)$. Since the degree of $\mathcal{P}$ is $g+\frac{g}{r}$ and $G$ is a general divisor we conclude  that $h^0(L(-Z-G))=0$. By Serre duality $h^1(\mathcal{P})=h^0(L(-Z-G))=0$, hence $h^0(\mathcal{P})=\frac{g}{r}+1.$ Moreover, since $G$ is general of degree $r+1$ we have $h^0(K_C(-\mathcal{P}+p))=0$ for any $p\in C$ and so $h^0(\mathcal{P}-p)=\frac{g}{r}$, thus $\mathcal{P}$ is free of base points. 

\noindent To finish the proof we are going to show that the multiplication map  $\mu_{\mathcal{P},L(-Z)}:H^0(\mathcal{P})\otimes H^0(L(-Z))\rightarrow H^0(K_C(G))$ is injective. We have $h^0(\mathcal{P}(-G))=h^0(\xi)=\frac{g}{r}=h^0(\mathcal{P})-1$. Since $\mathcal{P}$ is generated, there exists a basis $\{\sigma_0, \sigma_1,\ldots,\sigma_{\frac{g}{r}}\}\subset H^0(\mathcal{P})$ such that for every $j\in\{1,\ldots,\frac{g}{r}\}$ and for all $p\in G$, $\sigma_j(p)=0$ and $\sigma_0(p)\neq 0$. 

 \noindent Let $t:=l_0\otimes \sigma_0+l_1\otimes \sigma_1+\ldots+l_{\frac{g}{r}}\otimes \sigma_{\frac{g}{r}}\in\text{Ker}(\mu_{\mathcal{P}, L(-Z)})$, where $l_0,l_1,\ldots,l_{\frac{g}{r}}$ are sections in $H^0(L(-Z))$, then $l_0\cdot\sigma_0+l_1\cdot\sigma_1+\ldots+l_{\frac{g}{r}}\cdot\sigma_{\frac{g}{r}}=0$, that is, $-l_0\cdot\sigma_0=l_1\cdot\sigma_1+\ldots+l_{\frac{g}{r}}\cdot\sigma_{\frac{g}{r}}$. Note that for any $p\in G$, we have $l_0(p)=0$ because  $\sigma_0(p)\neq 0$ and $\sigma_j(p)=0$ for $j=1,...,\frac{g}{r}$. Since $l_0\in H^0(L(-Z)) $ and $l_0$ vanish on $G$, it follows that $l_0\in H^0(L(-Z-G))=\{0\}$, so $t=l_1\otimes \sigma_1+\ldots+l_{\frac{g}{r}}\otimes \sigma_{\frac{g}{r}}$, that is, $t$ is an element in the kernel of the multiplication map
\begin{eqnarray*}
 <\sigma_1,\ldots,\sigma_{\frac{g}{r}}>\otimes H^0(L(-Z))\xrightarrow{\mu}H^0(K_C(G)),
\end{eqnarray*}
where $< \sigma_1,\ldots,\sigma_{\frac{g}{r}}>=H^0(\mathcal{P}(-G))=H^0(\xi)=H^0(K_C\otimes L^{\vee}(Z))$. Note also that $\text{image}(\mu)\subset H^0(K_C)$, in fact, $\mu=\mu_{L(-Z),\xi}$ is the Petri map for $L(-Z)$ and we have the following diagram

$$
\begin{tikzcd}
H^0(\xi)\otimes H^0(L(-Z))\arrow{r}{\mu_{L(-Z),\xi}} \arrow[hookrightarrow]{d}
&H^0(K_C) \arrow[hookrightarrow]{d}\\
H^0(\mathcal{P})\otimes H^0(L(-Z))\arrow{r}{\mu_{\mathcal{P},L(-Z)}}&H^0(K_C(G)) 
\end{tikzcd}
$$

\noindent Since $C$ is general, the Petri map $\mu:H^0(L(-Z))\otimes H^0( K_C\otimes L^{\vee}(Z))\rightarrow H^0(K_C)$ is injective, then $t=0$ and this implies that $\mu_{\mathcal{P}, L(-Z)}$ is injective, thus by condition (iii) we have $\mathcal{P}\notin \Theta_{M_{L(-Z)}}$, hence $M_{L(-Z)}$ admits theta divisor.
\end{proof}

We recall by Remark 2.1 that $\xi$ is globally generated, then following the ideas in the proof of Theorem \ref{thetadivisorLZ} we have:

\begin{theorem}\label{principaltheorem}
 Let $L\in Pic^d(C)$ be a globally generated line bundle over a general curve $C$ such that $M_L$ is semistable (not stable). The syzygy bundle $M_{\xi}$ is stable and admits theta divisor.     

\end{theorem}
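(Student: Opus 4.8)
\emph{Proof proposal.}
First I would record the numerical data. Write $k:=g/r$ (an integer by condition (2), so $g=rk$); then Riemann--Roch together with Remark 2.1 give $\deg\xi=(r+1)(k-1)$ and $h^0(\xi)=h^1(L(-Z))=k$, so $M_\xi$ has rank $k-1$, degree $-(r+1)(k-1)$ and slope $\mu(M_\xi)=-(r+1)$; thus its theta divisor, if it exists, lives in $Pic^{g+r}(C)$. (We may assume $k\ge2$, so that $M_\xi\neq0$.) Since $\xi$ is globally generated, $M_\xi=M_{H^0(\xi),\xi}$ is the syzygy bundle of the complete generated linear series $(\xi,H^0(\xi))$, so the criterion of (\cite{abelyhugo}, Corollary 4.3) is available for $M_\xi$.

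To prove that $M_\xi$ admits theta divisor I would follow the proof of Theorem \ref{thetadivisorLZ} almost verbatim, interchanging the roles of $\xi$ and $L(-Z)$ (note $\xi\otimes L(-Z)=K_C$). Let $G'$ be a general effective divisor of degree $h^0(\xi)+1=k+1$ and set $\mathcal Q:=L(-Z)(G')$, so $\deg\mathcal Q=g+r$. Because $\deg G'>h^0(\xi)$ and $\xi$ is globally generated, a general $G'$ satisfies $h^0(\xi(-G'))=0$; by Serre duality this simultaneously gives $h^1(\mathcal Q)=h^0(\xi(-G'))=0$, hence $h^0(\mathcal Q)=r+1$, and (exactly as in Remark 2.1) that $\mathcal Q$ is base-point free. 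Tensoring $0\to M_\xi\to H^0(\xi)\otimes\mathcal O_C\to\xi\to0$ by $\mathcal Q$ and taking cohomology identifies $H^0(M_\xi\otimes\mathcal Q)$ with the kernel of the multiplication map $\mu_{\mathcal Q,\xi}\colon H^0(\mathcal Q)\otimes H^0(\xi)\to H^0(\mathcal Q\otimes\xi)=H^0(K_C(G'))$, so it suffices to prove this map injective.

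For the injectivity, choose a basis $\{\sigma_0,\sigma_1,\dots,\sigma_r\}$ of $H^0(\mathcal Q)$ adapted to the (reduced) divisor $G'$, with $\sigma_0(p)\neq0$ and $\sigma_j(p)=0$ for every $p\in G'$ and $j\ge1$, so that $\langle\sigma_1,\dots,\sigma_r\rangle=H^0(\mathcal Q(-G'))=H^0(L(-Z))$. If $t=\sum_{j=0}^r l_j\otimes\sigma_j\in\ker\mu_{\mathcal Q,\xi}$ with $l_j\in H^0(\xi)$, then evaluating the relation $\sum_j l_j\sigma_j=0$ at the points of $G'$ forces $l_0$ to vanish on $G'$, whence $l_0\in H^0(\xi(-G'))=0$. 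Thus $t$ lies in the kernel of the restricted map $H^0(L(-Z))\otimes H^0(\xi)\to H^0(K_C(G'))$, whose image is contained in $H^0(K_C)$ and which is precisely the Petri map $H^0(L(-Z))\otimes H^0(K_C\otimes L(-Z)^\vee)\to H^0(K_C)$ of $L(-Z)$ (recall $K_C\otimes L(-Z)^\vee=\xi$). Since $C$ is a general curve, the Gieseker--Petri theorem makes this map injective, so $t=0$; hence $\mu_{\mathcal Q,\xi}$ is injective, $h^0(M_\xi\otimes\mathcal Q)=0$, and $M_\xi$ admits theta divisor.

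It remains to upgrade semistability -- which is automatic from the existence of a theta divisor -- to stability. If $M_\xi$ were strictly semistable, then (\cite{abelyhugo}, Corollary 4.3), applied to the globally generated line bundle $\xi$ with $h^0(\xi)=k$ over the general curve $C$, would force in particular $h^1(\xi)=0$; but $h^1(\xi)=h^0(K_C\otimes\xi^\vee)=h^0(L(-Z))=r>0$, a contradiction. Therefore $M_\xi$ is stable. The step I expect to be the main obstacle is the injectivity of $\mu_{\mathcal Q,\xi}$: one must check carefully that a general $G'$ supplies all the vanishing invoked (existence of the adapted basis, and $l_0=0$) and that the residual multiplication map really is the Petri map of $L(-Z)$; once this is in place, the passage to stability is immediate.
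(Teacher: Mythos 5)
Your proposal is correct and follows essentially the same route as the paper: the paper also takes $\mathcal{F}=L(-Z+D)$ with $D$ a general effective divisor of degree $\tfrac{g}{r}+1$, reduces $h^0(M_\xi\otimes\mathcal{F})=0$ to the injectivity of the multiplication map via the adapted basis and the Gieseker--Petri theorem for $L(-Z)$, and deduces stability from $h^1(\xi)=h^0(L(-Z))=r\neq 0$ together with (\cite{abelyhugo}, Corollary 4.3). The only difference is that you spell out the injectivity argument (which the paper defers to ``the same argument as in Theorem \ref{thetadivisorLZ}'') and add the harmless observation that one may assume $g/r\geq 2$ so that $M_\xi\neq 0$.
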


\begin{proof}
By Serre duality Theorem, $h^1(\xi)=h^0(L(-Z))=r\neq 0$, this condition implies by (\cite{abelyhugo}, Corollary 4.3) that $M_{\xi}$ is stable. The slope $\mu(M_{\xi})=-\frac{g-1-r+\frac{g}{r}}{\frac{g}{r}-1}=-1-r$ is integral, so $M_{\xi}$ admits theta divisor if there exists a line bundle $\mathcal{F}$ of degree $\text{deg}(\mathcal{F})=g+r$ such that  $h^0(M_{\xi}\otimes \mathcal{F})=0.$ From the exact sequence
 \begin{eqnarray*}
  \xymatrix{ 0 \ar[r]^{} &  M_{\xi}\otimes \mathcal{F} \ar[r]^{}&  H^0(\xi)\otimes \mathcal{F}  \ar[r]^{} &  \xi\otimes \mathcal{F} \ar[r]^{} & 0,}
 \end{eqnarray*}
the condition $h^0(M_{\xi}\otimes \mathcal{F})=0$ it is equivalent to that the multiplication map $\mu_{\mathcal{F},\xi}:H^0(\mathcal{F})\otimes H^0(\xi)\rightarrow H^0(\xi\otimes \mathcal{F})$ is injective. Let $D$ be a general effective divisor of degree $\frac{g}{r}+1$,  and take $\mathcal{F}:= L(-Z+D)$. Note that the degree of $\mathcal{F}$ is $g+r$, and since $D$ is general we conclude  that $H^0(\xi(-D))=0$. By Serre duality $h^1(\mathcal{F})=h^0(\xi(-D))=0$, hence $h^0(\mathcal{F})=r+1.$ Moreover, $\mathcal{F}$ is generated: Since $D$ is general of degree $1+\frac{g}{r}$ we have $h^0(K_C(-\mathcal{F}+p))=0$ for any $p\in C$ and so $h^0(\mathcal{F}-p)=r$ as required.  Using the same argument as in the proof of Theorem \ref{thetadivisorLZ}, we have $\mu_{\mathcal{F},\xi}:H^0(\mathcal{F})\otimes H^0(\xi)\rightarrow H^0(K_C(D))$ is injective, and this completes the proof.
\end{proof}

\begin{theorem}\label{tetadivisorsemistable} 
Let $L\in Pic^d(C)$ be a generated line bundle over a general curve of genus $g$  with $h^0(L)=r+1$ such  that $M_L$ is strictly  semistable, then $M_L$ admits reducible theta divisor.
\end{theorem}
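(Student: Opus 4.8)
The plan is to realize $M_L$ as an extension of a line bundle by one of the syzygy bundles treated in Theorem \ref{thetadivisorLZ}, and then to add up the corresponding theta divisors. Since $M_L$ is strictly semistable, by (\cite{abelyhugo}, Corollary 4.3) conditions (1)--(3) hold, so there is an effective divisor $Z$ with $h^0(L(-Z))=r$ and $\deg Z=1+\frac{g}{r}$; note $r\ge 2$, as a line bundle is stable. Because $H^0(L(-Z))\subset H^0(L)$ is a hyperplane and $L(-Z)$ is globally generated (see the proof of Theorem \ref{thetadivisorLZ}), the evaluation map $H^0(L(-Z))\otimes\mathcal O_C\to L$ factors through $L(-Z)\hookrightarrow L$, so $M_{L(-Z)}=\ker\big(H^0(L(-Z))\otimes\mathcal O_C\to L(-Z)\big)$ is a subsheaf of $M_L$. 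The quotient $Q=M_L/M_{L(-Z)}$ has rank one and is torsion-free, since otherwise $M_L$ would admit a quotient line bundle of slope $<\mu(M_L)$, contradicting semistability; taking determinants in $0\to M_{L(-Z)}\to M_L\to Q\to 0$ then identifies $Q=\det M_L\otimes(\det M_{L(-Z)})^{-1}=L^{\vee}\otimes L(-Z)=\mathcal O_C(-Z)$. This gives the exact sequence
$$0\longrightarrow M_{L(-Z)}\longrightarrow M_L\longrightarrow \mathcal O_C(-Z)\longrightarrow 0.$$

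Next, set $v:=g-1-\mu(M_L)=g+\frac{g}{r}$, an integer, and work on $Pic^v(C)$. A Riemann--Roch count shows that $M_L$, $M_{L(-Z)}$ and $\mathcal O_C(-Z)$ all have slope $g-1-v$, hence $\chi(M_L\otimes\mathcal P)=\chi(M_{L(-Z)}\otimes\mathcal P)=\chi(\mathcal O_C(-Z)\otimes\mathcal P)=0$ for every $\mathcal P\in Pic^v(C)$. By the additivity of the determinant-of-cohomology line bundle and of its canonical theta section along the exact sequence above --- valid precisely because these three Euler characteristics vanish --- one obtains the equality of divisors on $Pic^v(C)$
$$\Theta_{M_L}=\Theta_{M_{L(-Z)}}+\Theta_{\mathcal O_C(-Z)},$$
as soon as the two sections on the right are not identically zero. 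This is the case: $\Theta_{\mathcal O_C(-Z)}=\{\mathcal P\in Pic^v(C):h^0(\mathcal P(-Z))\neq 0\}$ is the translate by $[Z]$ of the Riemann theta divisor, a nonzero effective divisor, while $M_{L(-Z)}$ admits theta divisor --- by Theorem \ref{thetadivisorLZ} when $g\ge 3$, and trivially when $r=2$ (then $M_{L(-Z)}$ is a line bundle, so $\Theta_{M_{L(-Z)}}$ is again a translate of the Riemann theta divisor), which between them cover all $g\ge 2$. Hence $\Theta_{M_L}\subsetneq Pic^v(C)$, so $M_L$ admits theta divisor, and $\Theta_{M_L}$ is a sum of two nonzero effective divisors; that is, $\Theta_{M_L}$ is reducible.

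The step requiring the most care is the additivity invoked above: one must know that $\Theta_{M_L}=\Theta_{M_{L(-Z)}}+\Theta_{\mathcal O_C(-Z)}$ holds \emph{as divisors}, and not merely the set-theoretic containment $\Theta_{M_{L(-Z)}}\subseteq\Theta_{M_L}$ which is immediate from $M_{L(-Z)}\hookrightarrow M_L$ --- indeed the containment $\Theta_{\mathcal O_C(-Z)}\subseteq\Theta_{M_L}$ is already not a formal consequence of the exact sequence. This identity is the standard additivity of the determinant of cohomology and of its canonical section for a family of sheaves with vanishing Euler characteristic (see e.g.\ \cite{popastrange}), which is exactly why the $\chi=0$ normalization of the previous paragraph is put in place first; once that is granted, the rest of the argument is elementary.
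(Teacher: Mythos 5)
Your proof is correct and follows essentially the same route as the paper's: the exact sequence $0\to M_{L(-Z)}\to M_L\to\mathcal O_C(-Z)\to 0$ from condition (3) of (\cite{abelyhugo}, Corollary 4.3), Theorem \ref{thetadivisorLZ} to see that $M_{L(-Z)}$ admits theta divisor, and additivity of theta divisors along the sequence (the paper writes $\Theta_{M_L}=\Theta_{M_{L(-Z)}}\cup\Theta_{\mathcal O(-Z)}$). You are in fact somewhat more careful than the paper, both in justifying the divisorial additivity via the determinant of cohomology and in covering the case $g=2$, where $r=2$ and Theorem \ref{thetadivisorLZ} (stated for $g\ge 3$) does not directly apply.
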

\begin{proof}

We recall by (\cite{abelyhugo}, Corollary 4.3) that if $M_L$ is strictly semistable, then $h^1(L)=0$,  $r$ divides $g$ and there exists an effective divisor $Z$  of degree $1+\frac{g}{r}$ such that $h^0(L(-Z))=r$ and $L(-Z)$ is generated. The vector bundles $M_L$ and $M_{L(-Z)}$ fit in the following exact sequence 
\begin{eqnarray}\label{sucesion22}
\xymatrix{ 0 \ar[r]^{} &  M_{L(-Z)} \ar[r]^{}&  M_L  \ar[r]^{} &  \mathcal{O}(-Z) \ar[r]^{} & 0.}
 \end{eqnarray}
 By Theorem \ref{thetadivisorLZ}, $M_{L(-Z)}$ admits  theta divisor and since the three vector bundles in the exact sequence  (\ref{sucesion22}) have the same slope $-1-\frac{g}{r}$, we see that $M_L$ admits a theta divisor, and in this case 
 $\Theta_{M_L}=\Theta_{M_{L(-z)}}\cup\Theta_{\mathcal O(-Z)}$ as divisors. This completes the proof.
\end{proof}

\section{Cohomologically semistable}
 In this section we prove that if $C$ is general then the syzygy bundle $M_L$ is cohomologically semistable, and we find precise conditions for the cohomologically semistable of $M_L$. Moreover, we show that the (semi)stability of $M_L$ is equivalent to the cohomological (semi) stability of $M_L$ when $C $ is a general curve and $L$ induces a birational morphism.

\begin{definition}
Let $E$ be a vector bundle over a curve $C$. We say that $E$ is cohomologically stable (respectively cohomologically semistable) if for any line bundle $P$ of degree $p$ and for any  integer $t<rank(E)$ we have
\begin{equation*}
h^0(\wedge^t E\otimes P^{\vee})=0
\end{equation*}
whenever $p\geq t\mu (E)$ (respectively, $p>t\mu(E))$.
\end{definition}
 
  \begin{remark}
\begin{em} From the isomorphism $Pic^s(C)\rightarrow Pic^{-s}(C)$ which associates to a line bundle its dual, and since the degree of $M_L$ is $-\frac{d}{r}$, it follows that $M_L$ is cohomologically semistable (respectively stable) if and only if  for any integer $t<r$ and any line bundle $\mathcal{A}$ of degree $deg(\mathcal{A})<t\cdot \frac{d}{r}$ $( respectively \leq)$, $h^0(\wedge^t M_L\otimes \mathcal{A})=0.$
\end{em}
\end{remark}
 
  \begin{remark}\label{cohomimplicaestabilidad}
  \begin{em}
  First note that cohomological (semi)stability implies (semi)stability: suppose that $E$ is not (semi)stable, so there exist a subbundle $F\subset E$ of degree $p$ and rank $t$ with $p>t\mu(E)$ (respectively $p\geq t\mu(E)$). Consider the non zero morphism $det(F)\rightarrow \wedge^t E$ induced by $F$, we have $h^0(\wedge^t E\otimes det(F)^{\vee})\neq 0$, and thus $E$ is not cohomologically (semi)stable.
 
  \end{em}
  \end{remark}
 \noindent The proof of the following lemma is a consequence of (\cite{mistretastopino}, Lemma 7.4.) 
\begin{lemma}
 Let $L$ be a line bundle over $C$ with $h^0(C,L)=r+1$ inducing a birational morphism $C\dashrightarrow\mathbb P^r$, then there exist general points $x_1,\ldots,x_{r-1}\in C$ such that $M_L$ fits in the following exact sequence
 \begin{eqnarray}
  \xymatrix{ 0 \ar[r]^{} &   L^{\vee}(\sum\limits_{j=1}^{r-1} x_j) \ar[r]^{}&  M_L \ar[r]^{} &  \bigoplus\limits_{j=1}^{r-1}\mathcal{O}_C (-x_j)\ar[r]^{} & 0.}
 \end{eqnarray}
\end{lemma}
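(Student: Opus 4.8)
The strategy is to produce the stated short exact sequence by dualizing a geometric construction on the image curve. Since $L$ induces a birational morphism $\varphi\colon C\dashrightarrow \mathbb{P}^r$, after resolving indeterminacies we may work with the normalization of the image, and the key point is that a general codimension-one linear section of $\mathbb{P}^r$ meets the image in exactly $d$ reduced points, $d-r+1$ of which are "new" and $r-1$ of which can be prescribed. Concretely, I would invoke (\cite{mistretastopino}, Lemma 7.4): choosing $r-1$ general points $x_1,\dots,x_{r-1}\in C$, there is a hyperplane $H\subset\mathbb{P}^r$ through $\varphi(x_1),\dots,\varphi(x_{r-1})$ whose pullback is $L=\mathcal{O}_C\!\left(\sum_{j=1}^{r-1}x_j+R\right)$ with $R$ effective of degree $d-r+1$, and the associated $r$-dimensional subspace of sections $V_H\subset H^0(L)$ vanishing on $\sum x_j$ fits into a diagram relating $M_L$ to a sum of the $\mathcal{O}_C(-x_j)$.

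The concrete steps are as follows. First, I would set $D:=\sum_{j=1}^{r-1}x_j$ and consider the $(r-1)$-dimensional subspace $W\subset H^0(L)$ of sections vanishing on $D$; genericity of the $x_j$ together with $L$ being very ample on a dense open set (birationality) guarantees that imposing vanishing at these points are independent conditions, so $\dim W=r+1-(r-1)=2$... — wait, this needs care: one wants the quotient picture, not a rank-two subspace. Instead, the right move is to look at the evaluation at $D$: the sequence $0\to H^0(L(-D))\to H^0(L)\to \bigoplus_{j}L_{x_j}\cong\mathbb{C}^{r-1}$ is exact on the right by genericity, and $h^0(L(-D))=2$. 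Second, I would dualize: the sub-line-bundle of $M_L$ is $L^\vee$ twisted appropriately, and tracking the evaluation map $H^0(L)\otimes\mathcal{O}_C\to L$ restricted to the subsheaf generated by sections vanishing at the $x_j$ yields the inclusion $L^\vee(D)\hookrightarrow M_L$. Third, I would identify the cokernel: it is supported on the $x_j$ only after the right normalization, and comparing first Chern classes ($\deg M_L=-d$, $\deg L^\vee(D)=r-1-d$, so the cokernel has degree $-(r-1)$, rank $r-1$) forces it to be $\bigoplus_{j}\mathcal{O}_C(-x_j)$ once one checks it is locally free — which follows because the quotient of $M_L$ by a sub-line-bundle that is a sub-bundle (saturated) is automatically locally free, and saturatedness follows from the genericity of the $x_j$ (the section of $M_L$ corresponding to $L^\vee(D)$ does not vanish anywhere).

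I expect the main obstacle to be the \emph{identification of the quotient}: knowing the cokernel $Q$ of $L^\vee(D)\hookrightarrow M_L$ is a rank-$(r-1)$ bundle of degree $-(r-1)$ does not by itself pin down $Q\cong\bigoplus\mathcal{O}_C(-x_j)$, so one genuinely needs the structure coming from Mistretta–Stoppino's Lemma 7.4, where the splitting is built into the geometry of the hyperplane section: each "residual" point $x_j$ contributes an $\mathcal{O}_C(-x_j)$ summand because the corresponding coordinate hyperplane cuts $C$ with $x_j$ as an isolated simple point of the residual scheme. So the honest version of the proof is: \emph{quote} (\cite{mistretastopino}, Lemma 7.4) to get the splitting of the quotient and the existence of the sub-line-bundle $L^\vee(D)$, then observe that the extension they produce is exactly the one displayed, after the elementary translation between their formulation (in terms of kernels of evaluation on the image curve) and ours (in terms of $M_L$ on $C$). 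The remaining verifications — that the $x_j$ can be taken general, that $h^0(L)=r+1$ is preserved, and that $L^\vee(D)$ really is a sub-bundle and not merely a sub-sheaf — are the routine genericity arguments I would carry out explicitly but not dwell on here.
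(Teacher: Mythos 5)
Your proposal lands in the same place as the paper: the paper offers no argument for this lemma beyond the single remark that it ``is a consequence of (\cite{mistretastopino}, Lemma 7.4),'' and your honest version is precisely to quote that lemma and translate its statement into the form displayed. The surrounding sketch (filtering $M_L$ by $M_{L(-\sum x_j)}\cong L^{\vee}(\sum x_j)$ for general points imposing independent conditions, with the splitting of the quotient supplied by Mistretta--Stoppino) is consistent with how that reference proves it, so there is nothing to correct.
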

 \begin{proposition}\label{cohomology}Let $L\in Pic^d(C)$ be a line bundle with $h^0(L)=r+1$ inducing a birational morphism over a curve $C$ of genus $g$. Let $\mathcal{A}$ be a line bundle over $C$ such that $\text{deg}(\mathcal{A})\leq t\cdot \frac{d}{r} $ and $h^0(\mathcal{A})\leq t$ with integers $t$, $d$ and $r$ satisfying $0<t<r<d\leq g+r$, then $h^0(\wedge^t M_L \otimes \mathcal{A})=0$.
 \end{proposition}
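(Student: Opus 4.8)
The plan is to prove the vanishing $h^0(\wedge^t M_L \otimes \mathcal{A})=0$ by induction on $t$, using the Lemma's presentation of $M_L$ together with its exterior powers. First I would fix the exact sequence
\begin{equation*}
0 \to L^\vee\bigl(\textstyle\sum_{j=1}^{r-1} x_j\bigr) \to M_L \to \bigoplus_{j=1}^{r-1} \mathcal{O}_C(-x_j) \to 0
\end{equation*}
from the Lemma, abbreviating $N := L^\vee(\sum_j x_j)$ and $E := \bigoplus_j \mathcal{O}_C(-x_j)$. Taking the $t$-th exterior power of a rank-$r$ bundle sitting in such an extension gives a filtration of $\wedge^t M_L$ whose graded pieces are $N \otimes \wedge^{t-1}E$ and $\wedge^t E$; equivalently there is a short exact sequence
\begin{equation*}
0 \to N \otimes \wedge^{t-1} E \to \wedge^t M_L \to \wedge^t E \to 0 .
\end{equation*}
Tensoring with $\mathcal{A}$ and passing to cohomology, it suffices to show that both $h^0(N \otimes \wedge^{t-1}E \otimes \mathcal{A})=0$ and $h^0(\wedge^t E \otimes \mathcal{A})=0$.

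For the second term, $\wedge^t E$ is a direct sum of line bundles $\mathcal{O}_C(-x_{i_1}-\cdots-x_{i_t})$ over $t$-subsets $\{i_1,\dots,i_t\}\subset\{1,\dots,r-1\}$, so $h^0(\wedge^t E\otimes\mathcal{A}) = \sum h^0(\mathcal{A}(-x_{i_1}-\cdots-x_{i_t}))$. Since the $x_j$ are general and $h^0(\mathcal{A})\le t$, imposing $t$ general points as vanishing conditions on sections of $\mathcal{A}$ kills all of $H^0(\mathcal{A})$ — that is, $h^0(\mathcal{A}(-x_{i_1}-\cdots-x_{i_t}))=0$ for general points. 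This is the standard fact that a general effective divisor of degree $\ge h^0$ imposes independent conditions, and it handles the $\wedge^t E$ contribution cleanly. For the first term, $\wedge^{t-1}E$ is again a sum of line bundles $\mathcal{O}_C(-x_{i_1}-\cdots-x_{i_{t-1}})$, so $N\otimes\wedge^{t-1}E\otimes\mathcal{A}$ is a sum of line bundles of the form $L^\vee(\sum_{j} x_j - x_{i_1}-\cdots-x_{i_{t-1}})\otimes\mathcal{A}$, which has degree $-d + (r-1) - (t-1) + \deg(\mathcal{A}) = \deg(\mathcal{A}) - d + r - t$. Using $\deg(\mathcal{A})\le t\cdot\frac{d}{r}$ and $t<r<d\le g+r$ I would bound this degree and, where it is nonnegative, rewrite the vanishing via Serre duality as a statement about $K_C\otimes L\otimes\mathcal{A}^\vee$ twisted down by general points, then again invoke generality of the $x_j$ (or of the points appearing when we unwind $\mathcal{A}$) to force $h^0=0$; alternatively, one sets up the induction so that this term is itself recognized as $h^0$ of a twist of $\wedge^{t-1}M_{L(-\text{something})}$ and reduces to a lower exterior power.

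I expect the main obstacle to be the first term $N\otimes\wedge^{t-1}E\otimes\mathcal{A}$: unlike the $\wedge^tE$ piece, its summands can have nonnegative degree, so pure generality of points is not automatically enough and one must combine the numerical constraints $\deg(\mathcal A)\le t\frac dr$, $h^0(\mathcal A)\le t$, $d\le g+r$ carefully. The cleanest route is probably to realize that $N\otimes\mathcal O_C(-x_{i_1}-\cdots-x_{i_{t-1}})$ is a quotient-type twist fitting the same birationality hypothesis at a lower level, i.e. to run the induction on $t$ with $r$ and $d$ adjusted, so that $h^0(N\otimes\wedge^{t-1}E\otimes\mathcal A)$ is controlled by the inductive hypothesis applied to a syzygy bundle of a line bundle of the form $L(-\sum x_j)$ (still birational, still with $d'\le g+r'$), with $\mathcal A$ playing the same role; the base case $t=1$ is exactly $h^0(M_L\otimes\mathcal A)=0$, which follows directly from the defining sequence $0\to M_L\otimes\mathcal A\to H^0(L)\otimes\mathcal A\to L\otimes\mathcal A$ once one checks $h^0(\mathcal A)\le 1$ and $\deg\mathcal A\le d/r$ forces the map $H^0(\mathcal A)\to H^0(L\otimes\mathcal A)$ to be injective by generality. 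Verifying that all the numerical hypotheses $0<t<r<d\le g+r$ are preserved under the inductive step is the bookkeeping I would be most careful about.
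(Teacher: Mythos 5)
Your decomposition is exactly the paper's: the presentation $0 \to L^\vee(\sum_j x_j) \to M_L \to \bigoplus_j \mathcal{O}_C(-x_j) \to 0$ from the Lemma, the induced sequence $0 \to N\otimes\wedge^{t-1}E \to \wedge^t M_L \to \wedge^t E \to 0$, the vanishing of the quotient term from generality of the $x_j$ together with $h^0(\mathcal{A})\le t$ --- all of this matches the paper's proof. The one place you stop short is precisely the step you flag as ``the main obstacle,'' and it is not an obstacle at all: the numerical hypotheses already force every summand of $N\otimes\wedge^{t-1}E\otimes\mathcal{A}$ to have negative degree. Indeed,
\begin{equation*}
\deg\bigl(L^\vee(\textstyle\sum_{j}x_j - x_{i_1}-\cdots-x_{i_{t-1}})\otimes\mathcal{A}\bigr) \;=\; -d+(r-1)-(t-1)+\deg(\mathcal{A}) \;\le\; -d+r-t+t\cdot\tfrac{d}{r} \;=\; (r-t)\Bigl(1-\tfrac{d}{r}\Bigr),
\end{equation*}
which is strictly negative because $r-t>0$ and $d>r$. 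So $h^0$ of each summand vanishes for degree reasons alone; no Serre duality, no case where the degree is nonnegative, and no induction on $t$ is needed. Your proposed inductive scaffolding (reducing to syzygy bundles of twists $L(-\sum x_j)$ and re-verifying the hypotheses $0<t<r<d\le g+r$ at each step) is machinery you never have to build --- and it would in any case require checking that the twisted-down line bundle still induces a birational morphism, which is an extra hypothesis you would have to justify. Carry out the one-line degree estimate above and your argument closes, coinciding with the paper's.
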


\begin{proof}
 Since $L$ is generated we have the exact sequence $$0\to M_L\to H^0(L)\otimes \mathcal{O}_C\to L\to 0.$$ 
 By Lemma 3.4, there exist general points $x_1,\ldots,x_{r-1}$ over $C$ such that $M_L$ fits into following exact sequence
  \begin{eqnarray*}
  \xymatrix{ 0 \ar[r]^{} &   L^{\vee}(\sum\limits_{j=1}^{r-1} x_j) \ar[r]^{}&  M_L \ar[r]^{} &  \bigoplus\limits_{j=1}^{r-1}\mathcal{O}_C (-x_j)\ar[r]^{} & 0,}
 \end{eqnarray*}
and for any $i_1,\ldots,i_t\in \{ 1,\ldots,r-1\}$ with $1\leq i_1<\ldots<i_t\leq r-1$ we have $h^0(\mathcal{A}(-x_{i_1}-\ldots-x_{i_t}))=0$. Put $F=\bigoplus\limits_{j=1}^{r-1}\mathcal{O}_C(-x_j)$. We get the exact sequence of exterior powers
\begin{eqnarray*}
  \xymatrix{ 0 \ar[r]^{} & \wedge^{t-1}F\otimes L^{\vee}(\sum\limits_{j=1}^{r-1}x_j) \ar[r]^{}& \wedge^t M_L \ar[r]^{} &  \wedge^{t}F \ar[r]^{} & 0.}
 \end{eqnarray*}
 That is,
 \begin{eqnarray}\label{cohomologyM}
  \xymatrix{ 0 \ar[r]^{} & \bigoplus\limits_{1\leq i_1< \ldots< i_{t-1}\leq r-2}   L^{\vee}(\sum\limits_{j=1}^{r-1}x_j-\sum\limits_{j=1}^{t-1}x_{i_j}) \ar[r]^{}& \wedge^t M_L \ar[r]^{} &  \bigoplus\mathcal{O}_C (-\sum\limits_{j=1}^{t}x_{i_j})\ar[r]^{} & 0.}
 \end{eqnarray}
 Twisting the exact sequence (\ref{cohomologyM}) by $\mathcal{A}$, we get the following exact sequence in cohomology
 \begin{eqnarray}\label{coh3}
  \xymatrix{  \bigoplus H^0(L^{\vee}(\sum\limits_{j=1}^{r-1}x_j-\sum\limits_{j=1}^{t-1}x_{i_j})\otimes \mathcal{A}) \ar[r]^{}& H^0(\wedge^t M_L\otimes \mathcal{A}) \ar[r]^{} &  \bigoplus H^0(\mathcal{O}_C (-\sum\limits_{j=1}^{t}x_{i_j})\otimes \mathcal{A}) &}
 \end{eqnarray}
 We want to prove that both left and right sides in (\ref{coh3}) are trivial. The right-hand side is a sum of global sections of line bundles such that for any $i_1,\ldots,i_t\in \{ 1,\ldots,r-1\}$ with $1\leq i_1<\ldots<i_t\leq r-1$ we have $h^0(\mathcal{A}(-x_{i_1}-\ldots-x_{i_t}))=0$, and  thus is equal to zero. The left-hand side is a sum of global sections of line bundles of degree
 \begin{eqnarray*}
  \text{deg}(  L^{\vee}(\sum_{j=1}^{r-1}x_j-\sum_{j=1}^{t-1}x_{i_j}) \otimes \mathcal{A})&=&-d+(r-1)-(t-1)+\text{deg}(\mathcal{A})\\
 &\leq& -d+r-t+t\cdot\frac{d}{r}\\
 &=&(r-t)(1-\frac{d}{r}). 
 \end{eqnarray*}
Since $r-t>0$ and $d>r$, we have this degree is negative and we get $ H^0(L^{\vee}(\sum_{j=1}^{r-1}x_j-\sum_{j=1}^{t-1}x_{i_j})\otimes \mathcal{A})=0$. Hence $H^0(\wedge^t M_L\otimes \mathcal{A})=0$ and this completes the proof.
	\end{proof}

On a general curve we can bound the dimension of the space global sections of the line bundle $\mathcal{A}$ satisfying the properties of Proposition \ref{cohomology}, that is:

\begin{proposition}\label{seccionesA}
   Let $\mathcal{A}$ be a line bundle over a general curve $C$ of genus $g$ such that $\text{deg}(\mathcal{A})\leq t\cdot \frac{d}{r} $ with integers $t$, $d$ and $r$ satisfying $0<t<r<d\leq g+r$, then $h^0(\mathcal{A})\leq t+1$. Moreover, if $h^0(\mathcal{A})=t+1$ then $\text{deg}(\mathcal{A})=t\cdot \frac{d}{r}$, $d=g+r$ and $t+1=r$.
\end{proposition}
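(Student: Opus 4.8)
The plan is to deduce both assertions from the Brill--Noether theorem on a general curve, treating the non-special and special cases separately. Write $s:=h^0(\mathcal{A})-1$ and $\delta:=\deg(\mathcal{A})$, so $\delta\le t\,\frac{d}{r}$. If $\mathcal{A}$ is non-special ($h^1(\mathcal{A})=0$), Riemann--Roch gives $s=\delta-g$, and combining $\delta\le t\,\frac{d}{r}$ with $d\le g+r$ and $t<r$ yields at once $s=\delta-g<t$, so $h^0(\mathcal{A})<t+1$; in particular the equality $h^0(\mathcal{A})=t+1$ is impossible for non-special $\mathcal{A}$, which is exactly what the ``moreover'' part will use.

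For the main case $h^1(\mathcal{A})=g-\delta+s\ge 1$: since $C$ is general and $\mathcal{A}$ is a line bundle of degree $\delta$ with $h^0=s+1$, the Brill--Noether number $\rho(g,\delta,s):=g-(s+1)(g-\delta+s)$ must be nonnegative, i.e. $(s+1)\,h^1(\mathcal{A})=(s+1)(g-\delta+s)\le g$. The crux is a lower bound for $h^1(\mathcal{A})$: assuming $s\ge t+1$ and using $\delta\le t\,\frac{d}{r}\le\frac{t(g+r)}{r}$ one gets $h^1(\mathcal{A})\ge\frac{g(r-t)}{r}+1$, hence $g\ge(s+1)h^1(\mathcal{A})\ge(t+2)\bigl(\frac{g(r-t)}{r}+1\bigr)$; dividing by $g>0$ and clearing denominators this collapses to $(t+2)(r-t)<r$, i.e. $r<\frac{t^2+2t}{t+1}<t+1$, contradicting $r>t$. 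Hence $s\le t$, so $h^0(\mathcal{A})\le t+1$ in all cases.

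For the supplementary claim, the above shows that $h^0(\mathcal{A})=t+1$ forces $\mathcal{A}$ to be special with $s=t$. Feeding $s=t$ into $\rho(g,\delta,t)\ge 0$, i.e. $(t+1)(g-\delta+t)\le g$, and using $\delta\le t\,\frac{d}{r}$ I would derive $r(g+t+1)\le(t+1)d$; combining this with $d\le g+r$ gives $r(g+t+1)\le(t+1)(g+r)$, which simplifies to $rg\le(t+1)g$, whence $r\le t+1$ and so $r=t+1$. Running these inequalities backwards, each must then be an equality, which pins down $d=g+r$ and $\deg(\mathcal{A})=t\,\frac{d}{r}$.

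I expect the delicate point to be this last step: the individual estimates are elementary, but one has to track carefully which ones are sharp in order to extract $r=t+1$, $d=g+r$ and $\deg(\mathcal{A})=t\,\frac{d}{r}$ exactly, rather than merely $r\le t+1$. The non-special case, by contrast, is immediate from Riemann--Roch but must not be overlooked, since there the Brill--Noether inequality is vacuous ($h^1(\mathcal{A})=0$ makes $\rho(g,\delta,s)=g\ge 0$ automatically).
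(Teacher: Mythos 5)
Your proposal is correct and uses essentially the same argument as the paper: both parts rest on the nonnegativity of the Brill--Noether number $\rho=g-(s+1)(g-\delta+s)$ on a general curve, combined with the bounds $\delta\le t\frac{d}{r}$, $d\le g+r$, $t<r$, and tracking when the resulting chain of inequalities is sharp. Your explicit separation of the non-special case (where Riemann--Roch alone gives $h^0(\mathcal{A})\le t$) is a minor reorganization of the same computation the paper performs directly on $\rho$, and all your intermediate inequalities check out.
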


\begin{proof} 
Suppose that $h^0(\mathcal{A})\geq t+2$. Since $C$ is a general curve and $\mathcal{A}$ is a line bundle of degree $\text{deg}(\mathcal{A})$ with at least $t+2$ sections, we conclude that the corresponding Brill-Noether number $\rho$ for the line bundle $\mathcal{A}$ is nonnegative, but 
\begin{eqnarray*}
\rho&:=&g-(t+2)\cdot (t+1-\text{deg}(\mathcal{A})+g)\\
       &=&g-(t+2)(t+1)+\text{deg}(\mathcal{A})(t+2)-(t+2)g\\
       &\leq&-g-(t+2)(t+1)+t(t+2)(\frac{g+r}{r})-tg\\
       &=&-g-(t+2)-tg(1-\frac{t+2}{r})<0, 
\end{eqnarray*}
which contradicts that $C$ is general. Then $h^0(\mathcal{A})\leq t+1$ and this proves the first statement. Now, suppose that  $h^0(\mathcal{A})=t+1$. Since $C$ is a general curve and $\mathcal{A}$ is a line bundle of degree $\text{deg}(\mathcal{A})$ with at least $t+1$ sections, we conclude that the corresponding Brill-Noether number $\rho$ for the line bundle $\mathcal{A}$ is nonnegative, but 
\begin{eqnarray*}
0\leq \rho&:=&g-(t+1)\cdot (t-\text{deg}(\mathcal{A})+g)\\
       &=&-t(t+1)+\text{deg}(\mathcal{A})(t+1)-tg\\
       &\leq& t(t+1)\frac{d}{r}-t(t+1)-tg\\
       &\leq& t(t+1)(\frac{g+r}{r})-t(t+1)-tg\\
       &=&gt(\frac{t+1}{r}-1)\leq 0. 
\end{eqnarray*}
So the Brill-Noether number $\rho$ is equal to zero if and only if  $\text{deg}(\mathcal{A})=t\cdot \frac{d}{r}$, $d=g+r$ and $t+1=r$.
 \end{proof}

 \begin{theorem}\label{CohoM}
   Let $L\in Pic^d(C)$ be a line bundle with $h^0(L)=r+1$, inducing a birational morphism over a general curve $C$ of genus $g$. Then $M_L$ is cohomologically semistable.
 \end{theorem}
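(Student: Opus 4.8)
The plan is to obtain the theorem as a formal consequence of Propositions~\ref{cohomology} and~\ref{seccionesA}. First I would rephrase the statement by duality, exactly as in the Remark preceding Remark~\ref{cohomimplicaestabilidad}: the bundle $M_L$ is cohomologically semistable if and only if $h^0(\wedge^t M_L\otimes\mathcal A)=0$ for every integer $t$ with $0<t<r$ and every line bundle $\mathcal A$ with $\deg(\mathcal A)<t\cdot\frac{d}{r}$. So it is enough to fix one such $t$ and one such $\mathcal A$ and prove this single vanishing.

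Before invoking the two propositions I would check that the numerical hypotheses $0<t<r<d\le g+r$ hold. The bound $d\le g+r$ is Riemann--Roch: $h^0(L)=r+1$ forces $d=g+r-h^1(L)$. The bound $d>r$ comes from birationality, since $L$ defines a birational morphism onto a nondegenerate curve of degree $d$ in $\mathbb P^r$, so $d\ge r$ with equality only for a rational normal curve, which is impossible for $g\ge 1$; the case $g=0$ is elementary (there $M_L\cong\mathcal O_C(-1)^{\oplus r}$, which is cohomologically semistable directly). So I may assume $g\ge 1$ and $0<t<r<d\le g+r$.

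Now the deduction. Since $\deg(\mathcal A)<t\cdot\frac dr$ we have in particular $\deg(\mathcal A)\le t\cdot\frac dr$, so Proposition~\ref{seccionesA} applies and gives $h^0(\mathcal A)\le t+1$. Here strictness is used: if $h^0(\mathcal A)=t+1$, the second part of Proposition~\ref{seccionesA} would force $\deg(\mathcal A)=t\cdot\frac dr$, contradicting $\deg(\mathcal A)<t\cdot\frac dr$. Hence $h^0(\mathcal A)\le t$. With $\deg(\mathcal A)\le t\cdot\frac dr$, $h^0(\mathcal A)\le t$ and $0<t<r<d\le g+r$ all in force, Proposition~\ref{cohomology} yields $h^0(\wedge^t M_L\otimes\mathcal A)=0$. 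As $t$ and $\mathcal A$ were arbitrary in the required range, $M_L$ is cohomologically semistable.

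There is no real obstacle here beyond what is already packaged in Propositions~\ref{cohomology} and~\ref{seccionesA}; the only delicate point is the interplay between the strict inequality in the definition of cohomological semistability and the weak inequalities in those propositions, which is precisely what makes the boundary case $h^0(\mathcal A)=t+1$ (forcing $d=g+r$, $t+1=r$, and $\deg(\mathcal A)=t\cdot\frac dr$) harmless here, and, conversely, flags it as the only case that can obstruct cohomological \emph{stability}, in agreement with Corollary~\ref{intropropiedades}.
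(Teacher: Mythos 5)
Your proposal is correct and takes essentially the same approach as the paper: the vanishing $h^0(\wedge^t M_L\otimes\mathcal A)=0$ is deduced by combining Propositions~\ref{cohomology} and~\ref{seccionesA}, with the strict inequality $\deg(\mathcal A)<t\cdot\frac{d}{r}$ ruling out the boundary case $h^0(\mathcal A)=t+1$. You are in fact slightly more careful than the paper's two-line proof, which invokes the two propositions without explicitly verifying the numerical hypotheses $0<t<r<d\le g+r$ or spelling out why $h^0(\mathcal A)\le t$ rather than $t+1$.
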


\begin{proof}Let $t<r$ be a positive integer, and consider a line bundle $\mathcal{A}$  of degree $\text{deg}(\mathcal{A})<t\cdot \frac{d}{r}$. From Propositions  \ref{cohomology} and \ref{seccionesA} we have $h^0(\mathcal{A})\leq t$ and $h^0(\wedge^t M_L \otimes \mathcal{A})=0$. Hence $M_L$ is cohomologically semistable, and which completes the proof.
 \end{proof}

It is of interest to find precise conditions for the  cohomological semistability of  $M_L$. In this context we have the following result:

 \begin{corollary}\label{hdivide}
 Let $L\in Pic^d(C)$ be a line bundle with $h^0(L)=r+1$, inducing a birational morphism  over a general curve $C$ of genus $g\geq 2$. Then $M_L$ is cohomologically stable if one the following conditions is satisfied:
\vskip1mm
 
   (i) $h^1(L)\neq 0$, or
  
  (ii) $h^1(L)=0$ and $r$ not divides $g$.
\end{corollary}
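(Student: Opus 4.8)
The plan is to prove the contrapositive: if $M_L$ is \emph{not} cohomologically stable, then $h^1(L)=0$ and $r\mid g$, so that the failure of either hypothesis in Corollary \ref{hdivide} forces cohomological stability. By Theorem \ref{CohoM} we already know $M_L$ is cohomologically semistable, so a failure of cohomological stability can only come from the borderline case: there exists an integer $t$ with $0<t<r$ and a line bundle $\mathcal{A}$ with $\operatorname{deg}(\mathcal{A})=t\cdot\frac{d}{r}$ (in particular $r\mid td$) and $h^0(\wedge^t M_L\otimes\mathcal{A})\neq 0$. First I would record that this already forces $d\le g+r$: indeed, Ein--Lazarsfeld's theorem (cited in the introduction) gives cohomological stability of $M_L$ whenever $\operatorname{deg}(L)\ge 2g+1$, and in fact the semistability of $M_L$ fails for $d>g+r$ by the results quoted from \cite{abelyhugo}, so in our situation the hypotheses $0<t<r<d\le g+r$ of Propositions \ref{cohomology} and \ref{seccionesA} are in force.

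Next I would apply Proposition \ref{seccionesA} to this $\mathcal{A}$: since $\operatorname{deg}(\mathcal{A})\le t\cdot\frac{d}{r}$ we get $h^0(\mathcal{A})\le t+1$. Now split into two cases. If $h^0(\mathcal{A})\le t$, then Proposition \ref{cohomology} applies verbatim and yields $h^0(\wedge^t M_L\otimes\mathcal{A})=0$, contradicting our assumption. Hence we must be in the remaining case $h^0(\mathcal{A})=t+1$. By the "moreover" part of Proposition \ref{seccionesA}, this forces simultaneously $\operatorname{deg}(\mathcal{A})=t\cdot\frac{d}{r}$, $d=g+r$, and $t+1=r$. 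In particular $d=g+r$ gives $h^1(L)=h^0(K_C\otimes L^{\vee})$; one checks $\operatorname{deg}(K_C\otimes L^{\vee})=2g-2-(g+r)=g-2-r$, and I would argue that on a general curve with $L$ giving a birational map to $\mathbb{P}^r$ this forces $h^1(L)=0$ — this is exactly condition (1) in the semistability criterion from \cite{abelyhugo}, and it can be deduced directly from a Brill--Noether count for the line bundle $L$ of degree $g+r$ with $r+1$ sections (here $\rho(g,r,g+r)=g-(r+1)(r-(g+r)+g)=g-(r+1)=\ldots$, wait — I must be careful: $\rho=g-(r+1)\cdot 0=g$ is $\ge 0$, so this alone does not kill $h^1$; instead I would note that if $h^1(L)\neq 0$ then $L$ is special of degree $g+r$ with $h^0(L)=r+1$, forcing via Serre duality and the Brill--Noether theorem on the general curve that $\operatorname{deg}(\mathcal{A})$-type constraints from Proposition \ref{seccionesA} are violated). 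Finally, from $\operatorname{deg}(\mathcal{A})=t\cdot\frac{d}{r}=t\cdot\frac{g+r}{r}$ being an integer and $t+1=r$, i.e. $t=r-1$, we get $\operatorname{deg}(\mathcal{A})=(r-1)\frac{g+r}{r}=(g+r)-\frac{g+r}{r}=g+r-1-\frac{g}{r}$, which is an integer only if $r\mid g$. Thus both conclusions $h^1(L)=0$ and $r\mid g$ hold, which is the contrapositive we wanted.

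Assembling this: if hypothesis (i) holds, i.e. $h^1(L)\neq 0$, then we cannot be in the case $h^0(\mathcal{A})=t+1$ (since that case forced $h^1(L)=0$), so Proposition \ref{cohomology} applies and $M_L$ is cohomologically stable; if hypothesis (ii) holds, i.e. $h^1(L)=0$ but $r\nmid g$, then again the case $h^0(\mathcal{A})=t+1$ is impossible (it forced $r\mid g$ via the integrality of $\operatorname{deg}(\mathcal{A})$), so once more Proposition \ref{cohomology} gives $h^0(\wedge^t M_L\otimes\mathcal{A})=0$ for every admissible $t$ and $\mathcal{A}$, and $M_L$ is cohomologically stable. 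In both cases the definition of cohomological stability is verified.

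The main obstacle I anticipate is the implication "$d=g+r$ and $L$ birational onto its image on a general curve $\Rightarrow h^1(L)=0$". This is not purely formal: a degree-$(g+r)$ line bundle with $r+1$ sections has $\rho\ge 0$, so Brill--Noether theory does not immediately exclude speciality. The cleanest route is probably to invoke the classification from \cite{abelyhugo} directly: the very hypotheses $h^0(\mathcal{A})=t+1=r$, $d=g+r$ and $r\mid g$ are precisely conditions (1)–(3) characterizing strict semistability of $M_L$ there (with $\mathcal{A}$ playing the role of $\wedge^{r-1}M_L$'s twist, cf. condition (3) of Corollary \ref{intropropiedades}), and since that characterization includes $h^1(L)=0$ as condition (1), the failure of cohomological stability is incompatible with $h^1(L)\neq 0$. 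So rather than reprove the $h^1$ vanishing from scratch, I would package the endgame as: "the case $h^0(\mathcal{A})=t+1$ forces all three conditions of the \cite{abelyhugo} semistability criterion, in particular $h^1(L)=0$ and $r\mid g$", and conclude.
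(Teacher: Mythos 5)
Your overall strategy is the same as the paper's: reduce, via Theorem \ref{CohoM} and Propositions \ref{cohomology} and \ref{seccionesA}, to excluding the borderline case $h^0(\mathcal{A})=t+1$, whose ``moreover'' clause forces $t=r-1$, $d=g+r$ and $\deg(\mathcal{A})=(r-1)\tfrac{g+r}{r}$; hypothesis (i) is then incompatible with $d=g+r$, and hypothesis (ii) is incompatible with the integrality of $(r-1)\tfrac{g+r}{r}$. That part of your argument is correct and is exactly what the paper does, stated contrapositively.

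The one place you go astray is the step you single out as ``the main obstacle'': deducing $h^1(L)=0$ from $d=g+r$. This is not an obstacle at all --- it is immediate from Riemann--Roch, since $h^0(L)=r+1$ gives $h^1(L)=h^0(L)-(d-g+1)=(r+1)-(r+1)=0$. Equivalently, and this is how the paper phrases it, $d=g+r-h^1(L)$, so $h^1(L)\neq 0$ forces $d<g+r$ and the borderline case of Proposition \ref{seccionesA} cannot occur. Your proposed workaround --- reading off $h^1(L)=0$ from the semistability classification of \cite{abelyhugo} --- does not work as stated: to invoke that classification you would need to know that $M_L$ is strictly semistable, but at this point you only know it is not cohomologically stable, and the equivalence between failure of cohomological stability and failure of stability is Corollary \ref{condicionescoh}(II), which comes later and relies on the present corollary; as written the appeal is circular. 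The same remark applies to your use of Ein--Lazarsfeld to get $d\le g+r$: that too is just Riemann--Roch, $d=g+r-h^1(L)\le g+r$. With these two appeals replaced by the one-line Riemann--Roch computation, your proof coincides with the paper's.
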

\begin{proof}
Set $h:=h^1(L)$. Let $t$ be an integer positive with $t<r$ and let $\mathcal{A}$ be a line bundle of degree $\text{deg}(\mathcal{A})\leq t\cdot \frac{d}{r}$. By Proposition \ref{cohomology}, it is enough to prove that $h^0(\mathcal{A})\leq t$. From Proposition \ref{seccionesA}, we have $h^0(\mathcal{A})\leq t+1$. In particular, if $h^0(\mathcal{A})= t+1$ then $t=r-1$, $d=g+r$ and $deg(\mathcal{A})=t\cdot \frac{d}{r}=(r-1)\cdot \frac{g+r}{r}.$ Under our hypothesis, we will prove that these three conditions are not satisfied simultaneously.
\vskip1mm

\noindent If $h\neq 0$, then $d <g+r$, and from Proposition \ref{seccionesA}  we have $h^0(\mathcal{A})\leq t$. This proves $(i)$.

\noindent To prove $(ii)$, if $h=0$ and $t=r-1$, then $d=g+r$ and 
\begin{eqnarray*}
 t\cdot \frac{d}{r}=(r-1)\cdot (\frac{g+r}{r})=g+r-1-\frac{g}{r}.
\end{eqnarray*}
 Suppose that $r$ not divides $g$, so the condition $\text{deg}(\mathcal{A})\leq (r-1)\cdot\frac{d}{r} $ implies that $\text{deg}(\mathcal{A})< (r-1)\cdot\frac{d}{r}$, and hence $h^0(\mathcal{A})\leq t=r-1$. This proves $(ii)$.
\end{proof}

\begin{remark}
\begin{em}
If $E$ is semistable then $E$ is cohomologically semistable: let $\mathcal{A}$ be a line bundle of degree $p$ and for any integer $t<\text{rank}(E)$ with $p<-t\mu(E)$. Since any exterior power of semistable bundle is semistable, we see that $\wedge^t E\otimes \mathcal{A}$ is semistable of slope negative and so $h^0(\wedge^t E\otimes \mathcal{A})=0.$ By Remark \ref{cohomimplicaestabilidad}, it follows that the cohomological semistability is equivalent to semistability. Nevertheless, since the exterior power of stables are not necessarily stable, we have the cohomologically stability is a condition stronger than stability. 

\noindent In this direction, we find precise conditions for strictly cohomological semistability of $M_L$ and prove that this conditions agree with semistability conditions for $M_L$. As consequence, it follows that the cohomological stability of $M_L$ is equivalent to stability of $M_L$.

\end{em}
\end{remark}

\noindent Let $L$ be a globally generated line bundle over a general curve of genus $g$  with $h^0(L)=r+1$. In (\cite{abelyhugo}, Corollary 4.3) the authors prove that the syzygy bundle $M_L$ is semistable (not stable) if and only if all the following three conditions hold:
\begin{enumerate}
  \item[(1)] $h^1(L)=0$.
	  
	  \vspace{.1cm}
	  
	  \item[(2)] $d=g+r$ and $r$ divides  $g$.
	  
	  \vspace{.1cm}
	  
	  \item[(3)] There is an effective divisor $Z$ with $h^0(L(-Z))=r$ and $\text{deg}(Z)=1+\frac{g}{r}$.
\end{enumerate}


\begin{corollary}\label{condicionescoh}
 Let $L\in Pic^d(C)$ be a line bundle with $h^0(L)=r+1$, inducing a birational morphism  over a general curve $C$ of genus $g\geq 2$. Then 
\begin{enumerate}
\item [(I)] $M_L$ is cohomologically semistable (not stable) if and only if all the following three conditions are satisfied
 \begin{enumerate}
  \item $h^1(L)=0.$
  \item $d=g+r$ and $r$  divides $g$.
  \item There is a line bundle $\mathcal{A}$ with degree $\text{deg}(\mathcal{A})=g+r-1-\frac{g}{r}$ and $h^0(\mathcal{A})=r$ such that $h^0(\wedge^{r-1}M_L\otimes \mathcal{A})\neq 0$.
 \end{enumerate}
 Moreover, in this case we have $h^0(\wedge^{r-1}M_L\otimes \mathcal{A})=1$.
 \item [(II)] $M_L$ is cohomologically stable if and only if $M_L$ is stable.
 \end{enumerate}
\end{corollary}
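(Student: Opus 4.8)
The plan is to reduce both parts to Propositions \ref{cohomology} and \ref{seccionesA}, which between them pin down the only way $M_L$ can fail to be cohomologically stable. I would first record two elementary observations: (i) $h^1(L)=0\iff d=g+r$ by Riemann--Roch (since $h^0(L)=r+1$), and more generally $r<d\le g+r$ always holds, the upper bound because $h^1(L)\ge 0$ and the lower bound because the image of the morphism attached to $|L|$ is a nondegenerate curve of degree $d$ in $\mathbb P^r$ of geometric genus $g\ge2$, hence not a rational normal curve, so $d\ge r+1$ and in particular $r\ge2$; (ii) $M_L$ is always cohomologically semistable by Theorem \ref{CohoM}, hence semistable (the two notions coincide, as noted in Section~3), so ``cohomologically semistable but not cohomologically stable'' means simply ``not cohomologically stable''. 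Now, by the duality reformulation of Section~3, $M_L$ is not cohomologically stable precisely when there exist an integer $0<t<r$ and a line bundle $\mathcal A$ with $\deg\mathcal A\le t\frac dr$ and $h^0(\wedge^tM_L\otimes\mathcal A)\ne0$. The contrapositive of Proposition \ref{cohomology} (whose numerical hypotheses hold by (i)) then gives $h^0(\mathcal A)\ge t+1$, and Proposition \ref{seccionesA} forces $h^0(\mathcal A)=t+1$ together with $t+1=r$, $d=g+r$ and $\deg\mathcal A=(r-1)\tfrac dr=g+r-1-\tfrac gr$; the integrality of this last number gives $r\mid g$.

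This reduction is exactly the equivalence in (I). If $M_L$ is cohomologically semistable but not stable, the previous paragraph produces conditions (a) ($h^1(L)=0$, equivalently $d=g+r$), (b) ($d=g+r$ and $r\mid g$), and (c) (the line bundle $\mathcal A$, with $h^0(\mathcal A)=r$ and $h^0(\wedge^{r-1}M_L\otimes\mathcal A)\ne0$). Conversely, assuming (a), (b), (c), the $\mathcal A$ of (c) has $\deg\mathcal A=(r-1)\tfrac dr$ and $h^0(\wedge^{r-1}M_L\otimes\mathcal A)\ne0$, so by the same reformulation $M_L$ is not cohomologically stable, hence (being cohomologically semistable) cohomologically semistable but not stable.

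For the ``moreover'' clause I would use $\wedge^{r-1}M_L\cong M_L^\vee\otimes\det M_L=M_L^\vee\otimes L^\vee$, so that $h^0(\wedge^{r-1}M_L\otimes\mathcal A)=\dim\operatorname{Hom}(M_L,\mathcal N)$ with $\mathcal N:=L^\vee\otimes\mathcal A$ a line bundle of degree $-\tfrac dr=\mu(M_L)$. Since $M_L$ is semistable of slope $\mu(M_L)$ and $\mathcal N$ is a line bundle, every nonzero map $M_L\to\mathcal N$ is surjective; thus (c) shows $M_L$ has a quotient line bundle of its own slope, so $M_L$ is strictly semistable and \cite{abelyhugo}, Corollary~4.3 supplies an effective divisor $Z$ with $\deg Z=1+\tfrac gr$, $h^0(L(-Z))=r$, $L(-Z)$ globally generated, and an exact sequence $0\to M_{L(-Z)}\to M_L\to\mathcal O(-Z)\to0$ with all slopes equal to $\mu(M_L)$; moreover $M_{L(-Z)}$ is stable (for $r\ge3$ it admits a theta divisor by Theorem \ref{thetadivisorLZ}, applicable since $r\mid g$ forces $g\ge r\ge3$, and $h^1(L(-Z))=\tfrac gr\ne0$ then upgrades semistability to stability via \cite{abelyhugo}, Corollary~4.3; for $r=2$ it is a line bundle). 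Applying $\operatorname{Hom}(-,\mathcal N)$ gives $0\to\operatorname{Hom}(\mathcal O(-Z),\mathcal N)\to\operatorname{Hom}(M_L,\mathcal N)\to\operatorname{Hom}(M_{L(-Z)},\mathcal N)$; here $\operatorname{Hom}(\mathcal O(-Z),\mathcal N)=H^0(\mathcal N(Z))$ is at most one-dimensional because $\deg\mathcal N(Z)=0$, while $\operatorname{Hom}(M_{L(-Z)},\mathcal N)$ vanishes for $r\ge3$ (a nonzero map would be a rank-one quotient of the stable $M_{L(-Z)}$ of slope $>\mu(M_L)$ inside the slope-$\mu(M_L)$ line bundle $\mathcal N$) and is at most one-dimensional for $r=2$. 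Hence $\dim\operatorname{Hom}(M_L,\mathcal N)\le1$ unless $r=2$ and $\mathcal N\cong\mathcal O(-Z)\cong M_{L(-Z)}=L(-Z)^\vee$ simultaneously, i.e. $L\cong\mathcal O(2Z)$; but then $h^0(L(-Z))=h^0(\mathcal O(Z))=2$ and a base-point-free pencil computation gives $H^0(L)=\operatorname{Sym}^2H^0(\mathcal O(Z))$, so the morphism of $|L|$ factors as the morphism of $|\mathcal O(Z)|$, which has degree $1+\tfrac g2\ge2$, followed by the quadratic Veronese $\mathbb P^1\hookrightarrow\mathbb P^2$, hence is not birational — contradicting the hypothesis. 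Therefore $\dim\operatorname{Hom}(M_L,\mathcal N)\le1$, and $\ge1$ by (c), giving the value $1$. This step — extracting the exact value $1$ rather than merely a nonzero number — is the delicate point: the argument for $r\ge3$ is clean, but the case $r=2$ genuinely uses that $|L|$ is birational (not just numerical data), which is why birationality is hypothesized.

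For (II): cohomological stability implies stability by Remark \ref{cohomimplicaestabilidad}. Conversely, assume $M_L$ stable; by the duality reformulation it suffices to prove $h^0(\wedge^tM_L\otimes\mathcal A)=0$ for all $0<t<r$ and all $\mathcal A$ with $\deg\mathcal A\le t\tfrac dr$. If $h^0(\mathcal A)\le t$, this is Proposition \ref{cohomology}. Otherwise Proposition \ref{seccionesA} forces $t=r-1$ and $\deg\mathcal A=(r-1)\tfrac dr$, so $\mathcal N:=L^\vee\otimes\mathcal A$ has slope $\mu(M_L)$, and $h^0(\wedge^{r-1}M_L\otimes\mathcal A)=\dim\operatorname{Hom}(M_L,\mathcal N)=0$ since a nonzero map would produce a proper quotient of the stable bundle $M_L$ of slope $>\mu(M_L)$ inside a line bundle of slope $\mu(M_L)$, which is impossible. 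Hence $M_L$ is cohomologically stable.
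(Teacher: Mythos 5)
Your proposal is correct, and for the main equivalences in (I) and (II) it follows essentially the paper's route: everything is funneled through Propositions \ref{cohomology} and \ref{seccionesA}, together with the isomorphism $\wedge^{r-1}M_L\cong M_L^{\vee}\otimes L^{\vee}$, which is exactly how the paper handles (II)($\Leftarrow$). The genuine divergence is in the ``moreover'' clause $h^0(\wedge^{r-1}M_L\otimes \mathcal{A})=1$. The paper gets the upper bound directly from Lemma 3.4: choosing the general points $x_1,\ldots,x_{r-1}$ so that $h^0(\mathcal{A}(-x_1-\cdots-x_{r-1}))=1$, the filtration of $\wedge^{r-1}M_L$ has left-hand pieces $L^{\vee}\otimes\mathcal{A}(-x_j)$ of negative degree $-2-\frac{g}{r}$, whence $h^0(\wedge^{r-1}M_L\otimes\mathcal{A})\le h^0(\mathcal{A}(-\sum x_j))=1$ --- three lines, uniform in $r$. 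You instead read the space as $\operatorname{Hom}(M_L,\mathcal{N})$ with $\mathcal{N}=L^{\vee}\otimes\mathcal{A}$, use strict semistability to produce the subbundle $M_{L(-Z)}$, and bound the Hom-space through $0\to M_{L(-Z)}\to M_L\to\mathcal{O}(-Z)\to 0$; this works, but it costs you the stability of $M_{L(-Z)}$ (itself routed through Theorem \ref{thetadivisorLZ}) and a separate, genuinely delicate analysis of $r=2$, where you must exclude $L\cong\mathcal{O}(2Z)$ using birationality and the base-point-free pencil trick. Both arguments are valid; the paper's is shorter, while yours has the virtue of exhibiting the ``extra'' section as a quotient line bundle of $M_L$ of slope $\mu(M_L)$ and of making explicit where birationality is used. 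Two minor points that do not affect correctness: in your $r\ge 3$ step the slope of $M_{L(-Z)}$ actually equals $\mu(M_L)=\deg\mathcal{N}$ --- the vanishing of $\operatorname{Hom}(M_{L(-Z)},\mathcal{N})$ comes from the fact that a rank-one quotient of the stable bundle $M_{L(-Z)}$ has slope strictly greater than $\mu(M_{L(-Z)})$, not from $M_{L(-Z)}$ itself having larger slope; and your proof of (II)($\Rightarrow$) via Remark \ref{cohomimplicaestabilidad} is the shortcut the paper also acknowledges before giving its alternative explicit construction with the divisor $Z$.
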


\begin{proof}
\begin{enumerate}
\item[(I)] 

\noindent($\Rightarrow$) Suppose that $M_L$ is strictly cohomologically semistable. By Corollary \ref{hdivide}, we have $h^1(L)=0$ and $r$ divides $g$, then by Riemann-Roch Theorem, $d=g+r$, thus we have conditions $(a)$ and $(b)$. Moreover,  there exist an integer $t<r$ and a line bundle $\mathcal{A}$ of degree $deg(\mathcal{A})=t\cdot \frac{d}{r}$ such that $h^0(\wedge^t M_L\otimes \mathcal{A})\neq 0$.  From the Propositions \ref{cohomology} and \ref{seccionesA},  we get that $t=r-1$, $h^0(\mathcal{A})=t+1=r$ and $\text{deg}(\mathcal{A})=t\cdot \frac{d}{r}=(r-1)(\frac{g+r}{r}).$ Consider general points $x_1,\ldots,x_{r-1}$ such that $h^0(\mathcal{A}(-x_{1}-\ldots-x_{r-1}))=1$, then by Lemma 3.4 we have the following exact sequence
 \begin{eqnarray}
  \xymatrix{ 0 \ar[r]^{} &   L^{\vee}(\sum\limits_{j=1}^{r-1} x_j) \ar[r]^{}&  M_L \ar[r]^{} &  \bigoplus\limits_{j=1}^{r-1}\mathcal{O}_C (-x_j)\ar[r]^{} & 0.}
 \end{eqnarray}
 Put $F=\bigoplus\limits_{j=1}^{r-1}\mathcal{O}_C(-x_j)$. We have the exact sequence of exterior powers
\begin{eqnarray*}
  \xymatrix{ 0 \ar[r]^{} & \wedge^{r-2}F\otimes L^{\vee}(\sum\limits_{j=1}^{r-1}x_j) \ar[r]^{}& \wedge^{r-1} M_L \ar[r]^{} &  \wedge^{r-1}F \ar[r]^{} & 0,}
 \end{eqnarray*}
 that is,
 \begin{eqnarray}\label{cohomologyMM}
  \xymatrix{ 0 \ar[r]^{} & \bigoplus\limits_{j=1}^{r-1}   L^{\vee}(-x_j) \ar[r]^{}& \wedge^{r-1} M_L \ar[r]^{} &  \mathcal{O}_C (-\sum\limits_{j=1}^{r-1}x_{j})\ar[r]^{} & 0.}
 \end{eqnarray}
 Twisting the exact sequence (\ref{cohomologyMM}) by $\mathcal{A}$, we get the following exact sequence in cohomology
 \begin{eqnarray*}
  \xymatrix{   & \bigoplus\limits_{j=1}^{r-1} H^0(L^{\vee}\otimes \mathcal{A}(-x_j)) \ar[r]^{}& H^0(\wedge^{r-1} M_L\otimes \mathcal{A}) \ar[r]^{} &   H^0(\mathcal{A}(-\sum\limits_{j=1}^{r-1}x_{j})). &}
 \end{eqnarray*}
The dimension of the right-hand side is one and the left-hand side is a sum of global sections of line bundles of degree $\text{deg}(L^{\vee}\otimes \mathcal{A}(-x_j))=-2-\frac{g}{r}<0,$ it follows that $h^0(\mathcal{A}(-\sum\limits_{j=1}^{r-1}x_{j}))=0$ and $h^0(\wedge^{r-1}M_L\otimes \mathcal{A})=1$. This proves condition $(c)$.

\vspace{.3cm}

\noindent($\Leftarrow$) The conditions $(a)$ and $(b)$ implies that $\mathcal{A}$ is a line bundle of degree 
\begin{eqnarray*}
\text{deg}(\mathcal{A})=g+r-1-\frac{g}{r}=(r-1)\cdot(\frac{g+r}{r})=(r-1)\cdot\frac{d}{r}. 
\end{eqnarray*}

Using Theorem  \ref{cohomologyM} and the hypothesis $h^0(\wedge^{r-1}M_L \otimes \mathcal{A})\neq 0$, it follows that $M_L$ is strictly cohomologically semistable.

\vspace{.3cm}

\item[(II)]
\noindent $(\Rightarrow)$ This follows from Remark \ref{cohomimplicaestabilidad}, however we want to give another proof by applying the conditions (1)-(3) and (a)-(c) above. So, suppose that $M_L$ is not stable. Since the conditions $(1)$ and $(2)$ are equivalents to $(a)$ and $(b)$ respectively, we can assume that condition $(3)$ is not satisfied. So there is an effective divisor $Z$ of degree $\text{deg}(Z)=1+\frac{g}{r}$ and $h^0(L(-Z))=r$. We recall that $L(-Z)$ is generated, and the vector bundles $M_L$ and $M_{L(-Z)}$ fit in the following exact sequence 
 \begin{eqnarray*}
\xymatrix{ 0 \ar[r]^{} &  M_{L(-Z)} \ar[r]^{}&  M_L  \ar[r]^{} &  \mathcal{O}(-Z) \ar[r]^{} & 0.}
 \end{eqnarray*}	
Taking exterior powers, we obtain
 \begin{eqnarray*}
\xymatrix{ 0 \ar[r]^{} &  L^{\vee}(Z) \ar[r]^{}&  \wedge^{r-1}M_L  \ar[r]^{} & \wedge^{r-2}M_{L(-Z)}\otimes \mathcal{O}(-Z) \ar[r]^{} & 0.}
 \end{eqnarray*}
 twisting by $L(-Z)$, we get
  \begin{eqnarray*}
\xymatrix{ 0 \ar[r]^{} &  \mathcal{O}_C \ar[r]^{}&  \wedge^{r-1}M_L\otimes L(-Z)  \ar[r]^{} & \wedge^{r-2}M_{L(-Z)}\otimes L(-2Z) \ar[r]^{} & 0.}
 \end{eqnarray*}
So $h^0(\wedge^{r-1}M_L\otimes L(-Z))\neq 0$, and $M_L$ is strictly  cohomologically semistable.

\vspace{.2cm}

\noindent$(\Leftarrow)$ Suppose that $M_L$ is not cohomologically stable, since the conditions $(a)$ and $(b)$ are equivalents to $(1)$ and $(2)$ respectively, we can assume that condition $(c)$ is not satisfied. So there is a line bundle $\mathcal{A}$   of degree $g+r-1-\frac{g}{r}$ such that $h^0(\wedge^{r-1}M_L\otimes \mathcal{A})\neq 0$. Using the following isomorphism 
 \begin{eqnarray*}
  \wedge^{r-1}M_L\cong M_L^{\vee}\otimes L^{\vee}
 \end{eqnarray*}
of vector bundles, there exists a not zero morphism $\mathcal{A}^{\vee}\otimes L\rightarrow M_L^{\vee}$. Since the slopes of $\mathcal{A}^{\vee}\otimes L$ and $M^{\vee}_L$ are the same and   $\mathcal{A}^{\vee}\otimes L$ is a line bundle, it follows that $M_L^{\vee}$ can not be stable.

\vspace{.2cm}

\end{enumerate}
\end{proof}

\noindent{\bf Acknowledgments:} Second named author warmly thanks the Centro de Ciencias Matem\'aticas (CCM,UNAM) in Morelia City for their hospitality and for the use of their resources during a posdoctoral year.  Second named author is supported by a Posdoctoral Fellowship from CONACyT, M\'exico.

 \end{document}